\documentclass[11pt,notitlepage,reqno,hidelinks]{amsart}

\usepackage[margin=1in]{geometry}
\usepackage{mathtools} 
\usepackage{amssymb} 
\usepackage{mathrsfs}
\usepackage{bbm}
\usepackage[shortlabels]{enumitem}
\usepackage{setspace}
\usepackage[dvipsnames]{xcolor}
\usepackage{multicol}
\usepackage{lastpage}
\usepackage{graphicx}
\usepackage{subfigure}
\usepackage{outlines}
\usepackage[labelfont=small,sc]{caption}
\usepackage{hyperref}
\usepackage[noabbrev,capitalise]{cleveref} 
\usepackage{float}

\usepackage{amsthm}
\newtheorem{theorem}{Theorem}[section]
\newtheorem*{theorem*}{Theorem}
\newtheorem{lemma}[theorem]{Lemma}
\newtheorem{proposition}[theorem]{Proposition}
\newtheorem*{proposition*}{Proposition}
\newtheorem{corollary}[theorem]{Corollary}
\newtheorem*{corollary*}{Corollary}
\newtheorem{notation}[theorem]{Notation}
\newtheorem{definition}[theorem]{Definition}

\newtheorem*{conjecture*}{Conjecture}
\newtheorem*{problem*}{Problem}

\theoremstyle{plain} 

\newenvironment{customthm}[1]
  {\innercustomthm}
  {\endinnercustomthm}
\crefname{innercustomthm}{Theorem}{Theorems}

\newenvironment{customcor}[1]
  {\innercustomcor}
  {\endinnercustomcor}
\crefname{innercustomcor}{Corollary}{Corollaries}
%
\theoremstyle{remark}
\newtheorem{remark}[theorem]{Remark}
\AtBeginEnvironment{remark}{%
  \pushQED{\qed}%
}
\AtEndEnvironment{remark}{\popQED\endremark}

\newtheorem{example}[theorem]{Example}
\AtBeginEnvironment{example}{%
  \pushQED{\qed}%
}
\AtEndEnvironment{example}{\popQED\endexample}
\newtheorem{tabexample}[theorem]{Ex.}

\newtheorem*{claim*}{Claim}


\numberwithin{equation}{section}
\numberwithin{theorem}{section}
\usepackage{chngcntr}



\overfullrule 5pt
\allowdisplaybreaks
\textwidth155mm
\setstretch{1.1}

\makeatletter
\def\author@andify{%
	\nxandlist {\unskip ,\penalty-1 \space\ignorespaces}%
	{\unskip {} \@@and~}%
	{\unskip \penalty-2 \space \@@and~}%
}
\makeatother

\newlength{\truelen}
\newcommand{\padbox}[3][c]{%
    \settowidth{\truelen}{\ensuremath{#2}}%
    \ifdim\truelen < #3%
        \makebox[#3][#1]{\ensuremath{#2}}%
    \else%
        \ensuremath{#2}%
    \fi%
}

\newlength{\minlen}
\newcommand{\flexbox}[3][l]{%
    \settowidth{\minlen}{\ensuremath{#3}}%
    \padbox[#1]{#2}{\minlen}%
}

\usepackage{tikz}
\usetikzlibrary{matrix,chains,shapes,arrows,scopes,positioning}
\usetikzlibrary{decorations.markings,decorations.pathreplacing}
\usetikzlibrary{arrows,arrows.meta}
\usetikzlibrary{shapes.geometric}
\usetikzlibrary{patterns}
\tikzstyle{empty}=[circle,draw=black!80,thick]
\tikzstyle{emptyn}=[circle,draw=black!80,fill=white,scale=0.5] 
\tikzstyle{nero}=[circle,draw=black!80,fill=black!80,thick] 
\usetikzlibrary{positioning, fit, calc}

\newcommand{\Irr}{\operatorname{Irr}}
\newcommand{\MN}{Murnaghan--Nakayama }
\newcommand{\sgn}{\operatorname{sgn}}
\newcommand{\Syl}{\operatorname{Syl}}
\newcommand{\Van}{\operatorname{Van}}
\newcommand{\Vanpow}{\operatorname{Van}_{\mathrm{pow}}}
\newcommand{\down}{\big\downarrow}
\newcommand{\leglen}{\ell\ell}
\newcommand{\N}{\mathbb{N}}

\newcommand{\bw}{\mathbf{w}}

\newcommand{\cH}{\mathcal{H}}
\newcommand{\cP}{\mathcal{P}}

\newcommand{\sP}{\mathsf{P}}

\DeclarePairedDelimiter{\abs}{\lvert}{\rvert}
\newcommand{\la}{\lambda}
\newcommand{\eps}{\epsilon}
\renewcommand{\phi}{\varphi}
\renewcommand{\epsilon}{\varepsilon}
\renewcommand{\emptyset}{\varnothing}
\newcommand{\bipf}[1]{\underline{\smash{#1}}}
\newcommand{\nudge}{\addlinespace[8pt]}
\newcommand{\halfnudge}{\addlinespace[3pt]}

\newcommand{\midhat}[1]{\widehat{\smash{#1}\vphantom{1}}\vphantom{#1}}
\newcommand{\hatd}{\midhat{d}}
\newcommand{\hatnu}{\widehat{\nu}}
\newcommand{\hath}{\midhat{h}}
\newcommand{\hatw}{\widehat{w}}
\newcommand{\auxw}{\widehat{w}^{0}}

\usepackage{forest} 
\forestset{
    tower/.style={%
        for tree={
            l sep=6pt,
            anchor=center,
            s sep=17pt,
            execute at begin node={\(}, 
            execute at end node={\)},
        }
    },
    compact tower/.style={%
        tower, for tree={
            s sep=6pt,
            l=6pt,
            execute at begin node={\Yboxdim{5pt} \scriptstyle},
        }
    },
    really compact tower/.style={%
        tower, for tree={
            s sep=0pt,
            l sep=4pt,
            l=2pt,
            execute at begin node={\Yboxdim{4pt} \scriptscriptstyle},
        }
    },
}

\usepackage{genyoungtabtikz}
\Yvcentermath1
\definecolor{paleazure}{HTML}{a6d2ff}
\definecolor{palegold}{HTML}{fff7bf}
\definecolor{beaverred}{HTML}{cc0000}
\definecolor{palered}{HTML}{f68c67}

\newcommand{\YcB}{\Yfillcolour{paleazure}}

\newcommand\wht{\Yfillcolour{white}}
\Yboxdim{9pt}
\usepackage{booktabs}
\usepackage{collcell}
\usepackage{makecell}
\usepackage{multirow}
\usepackage{array}   
\newcolumntype{C}{>{$}c<{$}}

\crefformat{section}{\S#2#1#3}
\crefrangeformat{section}{\S#3#1#4--#5#2#6}
\crefmultiformat{section}{\S#2#1#3}{ and~\S#2#1#3}{, \S#2#1#3}{, and~\S#2#1#3}
\crefrangemultiformat{section}{\S#3#1#4--#5#2#6}{ and \S#3#1#4--#5#2#6}{, \S#3#1#4--#5#2#6}{ and \S#3#1#4--#5#2#6}

\crefformat{tabexample}{Example~#2#1#3}
\crefrangeformat{tabexample}{Examples~#3#1#4--#5#2#6}
\crefmultiformat{tabexample}{Examples~#2#1#3}{ and~#2#1#3}{, #2#1#3}{ and~#2#1#3}
\crefrangemultiformat{tabexample}{Examples~#3#1#4--#5#2#6}{ and #3#1#4--#5#2#6}{, #3#1#4--#5#2#6}{ and #3#1#4--#5#2#6}

\crefname{enumi}{}{} 


\begin{document}

\vspace*{-0.2cm}

\title[Degrees and prime power order zeros of characters of $S_n$ and $A_n$]{Degrees and prime power order zeros \\ of characters of symmetric and alternating groups}
\subjclass[2020]{20C15, 20C30}

\author{Eugenio Giannelli}
\address[E.~Giannelli]{Dipartimento di Matematica e Informatica	U.~Dini, Viale Morgagni 67/a, Università degli Studi di Firenze, Firenze, Italy}
\email{eugenio.giannelli@unifi.it}

\author{Stacey Law}
\address[S. Law]{School of Mathematics, Watson Building, University of Birmingham, Edgbaston, Birmingham B15 2TT, UK}
\email{s.law@bham.ac.uk}

\author{Eoghan McDowell}
\address[E.~McDowell]{School of Mathematics, University of Bristol, UK}
\email{eoghan.mcdowell@bristol.ac.uk}

\begin{abstract}
	We show that the $p$-part of the degree of an irreducible character of a symmetric group is completely determined by the set of vanishing elements of $p$-power order. 
    As a corollary we deduce that the set of zeros of prime power order controls the degree of such a character.
    The same problem is analysed for alternating groups, where we show that when \(p=2\) this data can only be determined up to two possibilities.
    We prove analogous statements for the defect of the $p$-block containing the character and for the $p$-height of the character.
    \looseness=-1
\end{abstract}

\dedicatory{Dedicated to Gabriel Navarro on his 60th birthday.}

\makeatletter
\def\@setthanks{%
\vspace{-\baselineskip}\vspace{4pt}
\def\thanks##1{\@par##1\@addpunct.}
\thankses%
}
\def\journalinfo#1{\thanks{#1}}
\makeatother

\journalinfo{
This is the accepted manuscript for an article published in the Bulletin of the London Mathematical Society, available online at \href{https://doi.org/10.1112/blms.70160}{https://doi.org/10.1112/blms.70160}.
}

\maketitle

\thispagestyle{empty}

\vspace*{-0.55cm}

\section{Introduction}

Zeros of characters of finite groups have long interested mathematicians; a detailed account of the subject can be found in \cite{DPS}.
A recent striking result due to Miller \cite{Miller} is that, given a random irreducible character and a random element of a symmetric group \(S_n\), the probability that the character value equals zero is asymptotically equal to \(1\).
This has generated significant interest in the subject, see for example \cite{GLM, Moreto, Miller2025, Peluse2025}. 

Our focus in this article will be to examine how the zeros of \textbf{prime power order} determine properties of irreducible characters for symmetric and alternating groups.
Here, a \emph{zero} of a character \(\xi\) of a group \(G\) refers to an element \(g \in G\) such that \(\xi(g) = 0\).
We denote by $\Van(\xi)$ the set of all zeros of $\xi$ in $G$ (the \textit{vanishing set} of $\xi$).

The existence of a prime power order zero for every non-linear irreducible character of a finite group was established in \cite{MNO}, and some implications for the structure of Sylow subgroups were studied in \cite{DPSS,Madanha2022}.
Meanwhile, for any prime \(p\) and finite group \(G\) with Sylow \(p\)-subgroup \(P\), we know that the vanishing set $\Van(\chi\down_P)$ of an irreducible character \(\chi\) restricted to \(P\) is closely related to $\chi(1)_p$, the $p$-part of the degree of $\chi$ (i.e.~the largest power of \(p\) dividing \(\chi(1)\)).
The extreme cases are well understood: on the one hand, a theorem of Brauer asserts that $\chi(1)_p=|P|$ if and only if $\Van(\chi\down_P)=P\setminus\{1\}$ (see \cite[Theorem 8.17]{IBook});
on the other hand, it is well known that if $\chi(1)_p=1$ then $\Van(\chi\down_P)=\emptyset$. 
The converse to the last statement does not hold in general, but it was proved to be true in the case of symmetric and alternating groups \cite[Theorem 3.16]{GLLV}.
Another characterisation of when \(\chi(1)_p = 1\) in terms of zeros was given by Morotti \cite{Morotti}.
Our aim in this paper is to bridge these results by identifying the precise power of \(p\) dividing the character degree for symmetric groups and (as much as possible) for alternating groups.

The full vanishing set of an irreducible character of the symmetric group was shown to determine the character up to multiplication by linear characters by Belonogov \cite{Belonogov} (that is, \(\Van(\chi)\) determines \(\chi\) up to multiplication by the sign character).
In particular, one has that the full vanishing set $\Van(\chi)$ determines the degree $\chi(1)$.
G.~Navarro conjectured that, for the symmetric group, $\chi(1)$ should be fully controlled just by the set of prime power order zeros of $\chi$. 
We confirm this statement in \Cref{thm:main-sym-compare-allp} below.

The vanishing behaviour for characters of the alternating group is known to be more complicated: classifying which irreducible characters of the alternating group have the same zeros is an open problem, with a conjecture due to C.~Bessenrodt reported by Bowman \cite{MFO}.
Nevertheless, we are able to show that the degree is determined by the prime power order zeros up to two possibilities, providing examples to illustrate each possible behaviour. Furthermore, we demonstrate that this cannot be improved.

\medskip

\subsection{Results for symmetric groups \texorpdfstring{\(S_n\)}{Sn}}

\begin{customthm}{S1}\label{thm:main-sym-explicit}
	Let $n\in\N$ and let $p$ be a prime.
	Let \(P \in \Syl_p(S_n)\) and let \(\chi \in \Irr(S_n)\).
	The following numbers can be explicitly determined from the vanishing set $\Van(\chi\down_P)$:
	\begin{enumerate}[(i)]
	    \item the \(p\)-part of the degree of \(\chi\); 
	    \item the defect of the \(p\)-block containing \(\chi\);
	    \item the $p$-height of \(\chi\) in its $p$-block. 
	\end{enumerate}
\end{customthm}

We prove \Cref{thm:main-sym-explicit} in \Cref{sec:sym} by expressing the desired numbers in terms of the \emph{prime power weights} of the labelling partition of the character, which in turn are explicitly determined from the vanishing set in \Cref{thm:determine_weights}.
Our explicit determination has the following consequence.

\begin{customthm}{S2}\label{thm:main-sym-compare-pparts}
	Let $n\in\N$ and let $p$ be a prime.
	Let \(P \in \Syl_p(S_n)\) and let \(\chi,\psi \in \Irr(S_n)\).
	\[ \text{If}\ \Van(\chi\down_P)\subseteq\Van(\psi\down_P),\ \text{then}\ \chi(1)_p\leq\psi(1)_p. \]
In particular, if \(\Van(\chi\down_P)=\Van(\psi\down_P)\), then \(\chi(1)_p=\psi(1)_p\).
\end{customthm}

We remark that the converse of \Cref{thm:main-sym-compare-pparts} does not hold; counterexamples for every prime are provided in \cref{eg:converse_fails,eg:converse_fails_p=2}.
Nor does \Cref{thm:main-sym-compare-pparts} hold if the containments and inequalities are made strict; a counterexample for \(p=2\) is given by the irreducible characters of \(S_{10}\) labelled by the partitions \((6,4)\) and \((6,2,1,1)\).

By considering \Cref{thm:main-sym-compare-pparts} for all primes \(p\), we deduce the following corollary, that the set of zeros of prime power order of an irreducible character $\chi$ of $S_n$ determines the degree of $\chi$, confirming Navarro's prediction.
For a character \(\chi\), let \(\Vanpow(\chi) := \{g \in \Van(\chi) \mid g \text{ has prime power order}\}\).

\begin{customcor}{S3}\label{thm:main-sym-compare-allp}
	Let $n\in\N$ and let \(\chi,\psi \in \Irr(S_n)\).
	\[ \text{If}\ \Vanpow(\chi) \subseteq \Vanpow(\psi),\ \text{then}\ \chi(1)\leq\psi(1). \]
In particular, if \(\Vanpow(\chi)=\Vanpow(\psi)\), then \(\chi(1)=\psi(1)\).
\end{customcor}

\Cref{thm:main-sym-explicit} can be strengthened by replacing the Sylow subgroup \(P\) by any subgroup containing a defect group of the block in which \(\chi\) lies.
This is explained in \Cref{subsec:reduction_to_defect_group}.
As a consequence of the strengthened theorem, we deduce the following blockwise version of \Cref{thm:main-sym-compare-pparts}.
Given a $p$-block $B$ of $S_n$, we denote by $\mathrm{Irr}(B)$ the set of irreducible characters lying in $B$, and by $h_p(\chi)$ the $p$-height of an irreducible character $\chi$ in its $p$-block
(see \Cref{subsec:blocks} for definitions).

\begin{samepage}%
\begin{customcor}{S4}\label{thm:main-sym-compare-heights}
Let $n\in\N$ and $p$ be any prime. Let $B$ be a $p$-block of $S_n$ with defect group $D$ and let $\chi,\psi\in\Irr(B)$.
    \[ \text{If}\ \Van(\chi\down_D)\subseteq\Van(\psi\down_D),\ \text{then}\ h_p(\chi) \leq h_p(\psi). \]
In particular, if \(\Van(\chi\down_D)=\Van(\psi\down_D)\), then \(h_p(\chi)=h_p(\psi)\).
\end{customcor}
\end{samepage}

\medskip

\subsection{Results for alternating groups \texorpdfstring{\(A_n\)}{An}}

We consider the same question for the alternating group.
When the prime \(p\) is odd, the answer is essentially the same as, and follows very quickly from, that for the symmetric group.
However, when \(p=2\), the data of the character can in general only be determined up to two possibilities, and the proof is significantly more technical. 

\begin{customthm}{A1}\label{thm:main-alt-explicit}
	Let $n\in\N$ and let $p$ be a prime.
	Let \(Q \in \Syl_p({A_n})\) and let \(\chi \in \Irr(A_n)\).
	The following numbers can be explicitly determined from the vanishing set $\Van(\chi\down_{Q})$, except when \(p=2\) when they can be determined up to two possibilities:
	\begin{enumerate}[(i)]
	    \item the \(p\)-part of the degree of \(\chi\); 
	    \item the defect of the \(p\)-block containing \(\chi\);
	    \item the $p$-height of \(\chi\) in its $p$-block. 
	\end{enumerate}
\end{customthm}

The explicit method of determination is detailed in \Cref{cor:alt-determine-data}.
We remark that, when \(p=2\), there are many cases when the numbers above can in fact be uniquely determined (for example, the defect can be uniquely determined whenever \(n \not\equiv 3 \pmod{4}\)).
Nevertheless, in general, the statement cannot be improved: there exists pairs of irreducible characters with equal vanishing sets on \(Q\) but with differing \(2\)-heights, defects and/or \(2\)-parts of degrees. 
Several examples of this are presented in \cref{sec:examples}.

The explicit determination in \Cref{cor:alt-determine-data} has the following consequence.

\begin{customthm}{A2}\label{thm:main-alt-compare-pparts}
Let $n\in\N$ and let $p$ be a prime.
Let $Q\in\mathrm{Syl}_p(A_n)$ and let $\eta, \theta\in\Irr(A_n)$.
\[ \text{If}\ \Van(\eta\down_Q)\subseteq\Van(\theta\down_Q),\ \text{then}\ \begin{cases}
	    \eta(1)_p \leq \theta(1)_p & \text{ if \(p\) is odd;} \\
        \eta(1)_2 \leq 2\cdot\theta(1)_2 & \text{ if \(p=2\).}
	\end{cases} 
\]
In particular,
suppose \(\Van(\eta\down_Q)=\Van(\theta\down_Q)\); if \(p\) is odd, then \(\eta(1)_p=\theta(1)_p\),
while if \(p=2\) and without loss of generality \(\eta(1)_2 \geq \theta(1)_2\), then $\eta(1)_2 \in \{\,\theta(1)_2,\, 2\cdot\theta(1)_2\,\}$.
\end{customthm}

As with the symmetric group, we obtain corollaries by by considering all primes simultaneously and by weakening the hypothesis to only require knowledge of the vanishing on a defect group.

\begin{customcor}{A3}\label{thm:main-alt-compare-allp}
	Let $n\in\N$ and let \(\eta,\theta \in \Irr(A_n)\).
	\[ \text{If}\ \Vanpow(\eta)\subseteq\Vanpow(\theta),\ \text{then}\ \eta(1)\leq2\cdot\theta(1). \]
Moreover, if \(\Vanpow(\eta)=\Vanpow(\theta)\) and without loss of generality \(\eta(1) \geq \theta(1)\), then $\eta(1) \in \{\,\theta(1),\, 2\cdot\theta(1)\,\}$.
\end{customcor}

\begin{customcor}{A4}\label{thm:main-alt-compare-heights}
Let $n\in\N$ and $p$ be a prime. Let $B$ be a $p$-block of $A_n$, let \(D\) be a defect group of \(B\)
    and let $\eta, \theta\in\Irr(B)$.
	\[ \text{If}\ \Van(\eta\down_D)\subseteq\Van(\theta\down_D),\ \text{then}\ \begin{cases}
	    h_p(\eta) \leq h_p(\theta) & \text{ if \(p\) is odd;} \\
        h_2(\eta) \leq h_2(\theta)+1 & \text{ if \(p=2\).}
	\end{cases}  \]
In particular,
suppose \( \Van(\eta\down_D)=\Van(\theta\down_D)\); if \(p\) is odd, then \(h_p(\eta) = h_p(\theta)\),
while if \(p=2\) and without loss of generality \(h_p(\eta) \geq h_p(\theta)\), then $h_2(\eta) \in \{\,h_2(\theta),\, h_2(\theta)+1\,\}$.
\end{customcor}

\medskip

\subsection{Reduction to defect groups}
\label{subsec:reduction_to_defect_group}

The following proposition tells us that the interesting vanishing behaviour of an irreducible character occurs on a defect group for its block.
(The proofs for \(S_n\) and \(A_n\) can be found easily using the description of their defect groups
in \Cref{subsec:blocks}.)

\begin{proposition}[{\cite[Corollary~5.9]{NavarroBook}}]
	Let \(G\) be a finite group, let \(B\) be a block of \(G\) and let \(\chi \in \Irr(B)\).
	Let \(g \in G\).
	If the \(p\)-part of \(g\) does not lie in a defect group of \(B\), then \(\chi(g) = 0\).
\end{proposition}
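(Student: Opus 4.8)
This is a well-known consequence of Brauer's second main theorem, recorded in the cited source; I sketch the argument. Write $g = g_p\, g_{p'}$ as the product of its commuting $p$-part $x := g_p$ and $p'$-part $s := g_{p'}$, so that $s$ and $x$ commute and $s \in C_G(x)$. Brauer's second main theorem then expresses the character value as
\[
\chi(g) \;=\; \sum_{b}\; \sum_{\varphi \in \mathrm{IBr}(b)} d^{x}_{\chi\varphi}\,\varphi(s),
\]
where the outer sum ranges over the $p$-blocks $b$ of $C_G(x)$ for which the Brauer correspondent $b^{G}$ is defined and equals $B$, and the $d^{x}_{\chi\varphi}$ are the generalised decomposition numbers. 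It therefore suffices to show that, under the hypothesis on $x$, no block $b$ of $C_G(x)$ satisfies $b^G = B$: the sum is then empty and $\chi(g) = 0$.

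Suppose, for contradiction, that such a block $b$ exists, and let $D_b$ be a defect group of $b$. Since $x$ centralises all of $C_G(x)$, the cyclic subgroup $\langle x\rangle$ is a normal $p$-subgroup of $C_G(x)$, hence lies in $O_p(C_G(x))$; and $O_p(H)$ is contained in every defect group of every block of any finite group $H$, so $x \in D_b$. On the other hand, the Brauer correspondence is compatible with defect groups: since $b^G = B$, the subgroup $D_b$ is $G$-conjugate to a subgroup of some defect group $D$ of $B$. Replacing $D_b$ by this conjugate, we may assume $x \in D$, contradicting the hypothesis that the $p$-part of $g$ lies in no defect group of $B$. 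Hence no such $b$ exists, and $\chi(g) = 0$, as claimed.

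The proof is short once the ingredients are assembled, so the only real work — and the main point to get right — is marshalling the correct black boxes: the precise form of Brauer's second main theorem (in particular, that its sum runs over blocks $b$ of $C_G(x)$ with $b^G$ defined and equal to $B$, which is what makes every term meaningful), the inclusion $O_p(H) \leq D$ for every defect group $D$ of every block of every finite group $H$, and the defect-group compatibility of the Brauer correspondence (if $b^G = B$ then a defect group of $b$ is $G$-conjugate into a defect group of $B$). All three are standard in modular representation theory and can be found in the cited book of Navarro, which is why the statement is simply quoted here rather than reproved.
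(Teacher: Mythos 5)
Your proof is correct and is exactly the standard argument behind the cited result: the paper itself states this proposition only as a quotation of \cite[Corollary~5.9]{NavarroBook} and gives no proof, and your sketch via Brauer's second main theorem, the containment $\langle x\rangle \leq O_p(C_G(x)) \leq D_b$, and the defect-group compatibility of the Brauer correspondence is precisely how the source proves it. No gaps; the three black boxes you invoke are all standard and correctly applied.
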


This justifies our claim that we can strengthen our main theorems to only requiring knowledge of the vanishing set on a subgroup containing the defect group, and hence deduce \Cref{thm:main-sym-compare-heights,thm:main-alt-compare-heights}.
We nevertheless state our other main results using the full Sylow subgroup to avoid assuming that we {\itshape a priori} know the blocks or their defect groups.

We can in fact weaken the hypotheses of \Cref{thm:main-sym-compare-heights} further to permit $\chi$ and $\psi$ to lie in different blocks (and similarly for \Cref{thm:main-alt-compare-heights}).
More precisely, suppose \(\chi\) lies in a block \(B\) of \(S_n\) with defect group \(D\) and suppose \(\psi\) lies in a block \(B'\) of \(S_n\) with defect group \(D'\), and suppose that \(|D| \geq |D'|\), so that, in particular, $D'$ is $S_n$-conjugate to a subgroup of $D$ (see \cref{subsec:blocks}).
If \(\Van(\chi\down_D) \subseteq \Van(\psi\down_D)\) then we can deduce \(\chi(1)_p \leq \psi(1)_p\), while
if \(\Van(\chi\down_D) = \Van(\psi\down_D)\) then we can deduce that \(|D| = |D'|\), \(\chi(1)_p = \psi(1)_p\) and $h_p(\chi)=h_p(\psi)$.

\medskip

\subsection{What the prime power order zeros do \emph{not} know}

At the beginning of the introduction, we noted that given $\chi, \psi \in \Irr(S_n)$, Belonogov's work \cite{Belonogov} demonstrates that if $\Van(\chi) = \Van(\psi)$, then necessarily $\chi = \psi \cdot \zeta$ for some linear character $\zeta$ of $S_n$.
In contrast,
\Cref{thm:main-sym-compare-pparts} establishes that if $\Vanpow(\chi) = \Vanpow(\psi)$, then $\chi(1) = \psi(1)$.
This raises the natural question of whether $\Vanpow(\chi)$ suffices to uniquely determine the irreducible character $\chi$, up to multiplication by a linear character. The answer to this question is negative.
As a counterexample, consider the irreducible characters $\chi^\lambda, \chi^\mu \in \mathrm{Irr}(S_{14})$ corresponding to the partitions $\lambda = (6, 3, 3, 2)$ and $\mu = (5, 5, 2, 1, 1)$. Direct computations reveal that $\Vanpow(\chi^\lambda) = \Vanpow(\chi^\mu)$, yet $\chi^\lambda$ is not equal to $\chi^\mu$ multiplied by any linear character of $S_{14}$.
This underscores the distinction between \Cref{thm:main-sym-compare-pparts} and Belonogov's result.

For the alternating group, we already noted that the vanishing set on a Sylow \(2\)-subgroup is insufficient to determine the \(2\)-part of the degree.
This ambiguity cannot be resolved even if we consider all prime power order elements.
Consider $\lambda = (10, 4, 3)$ and $\mu = (7, 2, 2, 2, 2, 2)$,
and let $\eta = \chi^\lambda \down_{A_{17}}$ and $\theta = \chi^\mu \down_{A_{17}}$ denote the corresponding irreducible characters of $A_{17}$.
Direct computations show that $\Vanpow(\eta) = \Vanpow(\theta)$ and that $\theta(1) = 2 \cdot \eta(1) \neq \eta(1)$.
This demonstrates that, for alternating groups, the set of prime power order zeros of an irreducible character does not uniquely determine its degree.
 
\bigskip

\section{Background on combinatorics and characters}

Throughout, we let $\N$ denote the set of natural numbers and $\N_0$ the non-negative integers.
For $n\in\N$ and $p$ a prime, we denote by $n_p$ the maximal power of $p$ dividing $n$, and by $\nu_p(n)$ the $p$-adic valuation of $n$ (so that $p^{\nu_p(n)}=n_p$).

We recall some basic properties of the irreducible characters of $S_n$ and \(A_n\), and refer the reader to \cite{JK} and \cite{OlssonBook} for more detailed discussions of the topic. 

\medskip

\subsection{Partitions and characters}

A \emph{partition} of \(n \in \N_0\) is a sequence of weakly decreasing positive integers whose sum is \(n\).
When writing partitions, we frequently collect parts of equal size, letting \(e^w\) denote \(w\) parts of size \(e\).
(It should always be clear from context, e.g.~by specifying the size of the partition, whether $e^w$ refers to a single part of size $e^w$ or $w$ parts of size $e$.)
For each $n\in\N_0$, we let $\cP(n)$ denote the set of partitions of $n$.
We write $|\lambda|=n$ to mean $\lambda\in\cP(n)$.
The \emph{Young diagram} $Y(\lambda)$ of a partition $\lambda=(\lambda_1,\ldots, \lambda_z)$ is defined as
\[ Y(\lambda)=\{(i,j)\in\mathbb{N}\times\mathbb{N} \mid 1\leq i\leq z,\ 1\leq j\leq\la_i\}. \]
Each element $(i,j)\in Y(\lambda)$ is called a \textit{node} of $Y(\lambda)$.
We denote by $\lambda'$ the conjugate partition of $\lambda$ which is the partition satisfying $Y(\lambda')=\{(j,i)\mid(i,j)\in Y(\lambda)\}$.

The irreducible characters of $S_n$ are naturally parametrised by the elements of $\cP(n)$.
Given a partition $\lambda$ of $n$ we denote by $\chi^{\lambda}$ its corresponding irreducible character.

The irreducible characters of $A_n$ can be easily described in relation to those of $S_n$.
For any $\lambda\in\cP(n)$ such that $\lambda\neq\lambda'$, we have that $\chi^\lambda\down_{A_n}=\chi^{\lambda'}\down_{A_n}\in\Irr(A_n)$. On the other hand, if $\lambda=\lambda'$ then $\chi^\lambda\down_{A_n}=\phi^{\lambda+}+\phi^{\lambda-}$ with $\phi^{\lambda+}\ne\phi^{\lambda-}\in\Irr(A_n)$. All of the irreducible characters of $A_n$ are of one of these two forms. In other words,
\[ \Irr(A_n) = \{ \chi^\lambda\down_{A_n} \mid \lambda\ne\lambda'\in\cP(n) \} \sqcup \{ \phi^{\lambda+},\phi^{\lambda-} \mid \lambda=\lambda'\in\cP(n) \}.  \]
Given $\phi\in\Irr(A_n)$ and $\chi\in\Irr(S_n)$, we say that $\chi$ \textit{covers} $\phi$ if \(\phi\) is a constituent of \(\chi\down_{A_n}\). 

\medskip

\subsection{Hooks}

The \textit{hook} of $\lambda$ associated to the node $(i,j)$ is the set of nodes
\[
    H_{i,j}(\lambda) = \{\, (i,j) \,\} \sqcup \{\, (i,y) \mid j+1 \leq y \leq \lambda_i \,\} \sqcup \{\, (x,j) \mid i+1 \leq x \leq \lambda'_j\,\},
\]
and we let $h_{i,j}(\lambda)=|H_{i,j}(\lambda)|$ be the \emph{length} of the hook $H_{i,j}(\lambda)$. 
Following standard notation, we use the symbol $\cH(\lambda)$ to denote the multiset of hook lengths of $\lambda$.
We remark that in this article, the term `$e$-hook' will always mean a hook of length equal to $e$.

The hook length formula (stated below; see also \cite[Theorem~2.3.21]{JK}) expresses the degree of an irreducible character in terms of hook lengths.

\begin{theorem}[Hook length formula]\label{thm:hlf}
	Let \(n \in \N\) and let \(\la\) be a partition of \(n\).
	Then
	\[ \chi^\la(1) = \frac{n!}{\prod_{h \in \cH(\la)}h}. \]
\end{theorem}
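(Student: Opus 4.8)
The plan is to follow the classical route via Frobenius's determinantal formula for the degree, combined with the arithmetic of first-column hook lengths (``$\beta$-numbers''). Write $\la = (\la_1 \geq \cdots \geq \la_z > 0)$ with exactly $z$ parts and set $\beta_i := \la_i + z - i$, so that $\beta_1 > \beta_2 > \cdots > \beta_z \geq 1$. Note that $\beta_i = h_{i,1}(\la)$ is precisely the length of the hook in the $i$-th row and first column, and that $B := \{\beta_1, \dots, \beta_z\}$ records exactly the set of first-column hook lengths.

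First I would recall (or re-derive by induction from the branching rule) Frobenius's formula
\[ \chi^\la(1) = n!\,\det\!\left( \frac{1}{(\la_i - i + j)!} \right)_{1 \leq i,j \leq z}, \]
with the convention $1/m! = 0$ for $m < 0$. Substituting $\la_i - i + j = \beta_i - z + j$ and factoring $\beta_i!$ out of the $i$-th row turns the $(i,j)$-entry into the falling factorial $\beta_i(\beta_i - 1)\cdots(\beta_i - z + j + 1)$, a monic polynomial of degree $z-j$ in $\beta_i$. As $j$ runs over $1, \dots, z$ these degrees run over $\{0, 1, \dots, z-1\}$, so column operations reduce the matrix to the Vandermonde matrix $(\beta_i^{z-j})_{i,j}$ without changing the determinant, giving
\[ \chi^\la(1) = n! \cdot \frac{\prod_{1 \leq i < j \leq z}(\beta_i - \beta_j)}{\prod_{i=1}^z \beta_i!}. \]

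It then remains to prove the purely combinatorial identity
\[ \prod_{h \in \cH(\la)} h \;=\; \frac{\prod_{i=1}^z \beta_i!}{\prod_{1 \leq i < j \leq z}(\beta_i - \beta_j)}. \]
For this I would establish the standard $\beta$-set description of hook lengths: for each fixed row $i$, the multiset $\{\, h_{i,j}(\la) : 1 \leq j \leq \la_i \,\}$ equals $\{\, \beta_i - b : 0 \leq b < \beta_i,\ b \notin B \,\}$, which, since $B \cap \{0, 1, \dots, \beta_i - 1\} = \{\beta_{i+1}, \dots, \beta_z\}$, is the same as $\{1, 2, \dots, \beta_i\} \setminus \{\, \beta_i - \beta_k : i < k \leq z \,\}$. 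Taking the product over $j$ in row $i$ gives $\prod_{j} h_{i,j}(\la) = \beta_i! \big/ \prod_{k>i}(\beta_i - \beta_k)$, and multiplying over all rows $i = 1, \dots, z$ yields the displayed identity; combining it with the determinant evaluation completes the proof.

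The step I expect to be the main obstacle is this $\beta$-set description of the hook lengths of a row: everything around it is either a citation or bookkeeping, whereas here is where the combinatorial content sits, and one must check carefully that the differences $\beta_i - \beta_k$ with $i < k \leq z$ enumerate exactly the values missing from $\{1, \dots, \beta_i\}$ with none omitted or repeated (in particular that $h_{i,1} = \beta_i$ is the largest hook length in row $i$, and that each $\beta_i - \beta_k$ lies strictly between $0$ and $\beta_i$). If one prefers to avoid invoking Frobenius's formula, an alternative is a direct induction on $n$: the branching rule gives $\chi^\la(1) = \sum_{A}\chi^{\la \setminus A}(1)$ over the removable corners $A$ of $\la$, so it suffices to verify the equivalent combinatorial recursion $\sum_A \bigl(\prod\cH(\la)\bigr)/\bigl(\prod\cH(\la\setminus A)\bigr) = n$; rewriting each summand in $\beta$-numbers (removing a corner replaces some $\beta_r$ by $\beta_r - 1$) turns this into a rational-function identity that falls to a partial-fraction/residue computation, trading the obstacle above for a slightly fiddly but routine algebraic manipulation.
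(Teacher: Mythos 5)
The paper does not prove this statement; it is quoted as a classical result with a citation to \cite[Theorem~2.3.21]{JK}. Your argument is correct and is essentially the standard proof found in that reference: Frobenius's determinantal degree formula, the reduction to a Vandermonde determinant after factoring out $\beta_i!$ from each row, and the first-column hook length (``$\beta$-set'') description of the row hook lengths, which is \cite[Lemma~2.3.17]{JK}. The one step you rightly flag as carrying the combinatorial content --- that the hook lengths in row $i$ are exactly $\{1,\dots,\beta_i\}\setminus\{\beta_i-\beta_k : k>i\}$ --- is a correct statement and is provable by the arm/leg bookkeeping you describe, so there is no gap.
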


\medskip

\subsection{Cores, quotients and weights}

A hook can be \emph{removed} from a partition by deleting the nodes of the hook from the Young diagram and sliding the remaining nodes north-west so that they form a new Young diagram.
This is illustrated below with the removal of the \(8\)-hook \(H_{2,3}(\la)\) (shaded in the first diagram) from the partition \(\la = (9,8,6,5,1)\) to form the partition \(\la \setminus H_{2,3}(\la) = (9,5,4,2,1)\).

\[
\gyoung(;;;;;;;;;,;;!\YcB;;;;;;,!\wht;;!\YcB;!\wht;;;,;;!\YcB;!\wht;;,;)
\quad\rightsquigarrow\quad
\gyoung(;;;;;;;;;,;;::::::,;;:;;;,;;:;;,;)
\quad\rightsquigarrow\quad
\gyoung(;;;;;;;;;,;;;;;,;;;;,;;,;)
\]
\vspace{0.1cm}

Let \(n \in \N\), \(e \in \N_{\geq 2}\) and let \(\la \in \cP(n)\).
The \emph{\(e\)-core} of a partition \(\la\), which we denote by \(C_e(\la)\), is the partition obtained by iteratively removing all \(e\)-hooks from \(\la\).
(This turns out to be independent of the order in which $e$-hooks are removed; see \cite[Theorem 2.7.16]{JK}, for instance.)
The \(e\)-weight of \(\la\), which we denote by \(\bw_e(\la)\), is the number of hooks removed from \(\la\) to form \(C_e(\la)\); equivalently, \(\bw_e(\la)\) is the number of hooks in \(\la\) of length divisible by \(e\).
Note \(n= |C_e(\la)| + e\bw_e(\la)\).

The \emph{\(e\)-quotient} of \(\la\), which we denote by $Q_e(\lambda)=(\lambda^{(0)},\lambda^{(1)},\dotsc,\lambda^{(e-1)})$, is an \(e\)-tuple of partitions that encodes the structure of the \(e\)-hooks of \(\la\): see \cite[Chapter 2]{JK} or \cite[Section 3]{OlssonBook} for details on how to define the \(e\)-quotient via James's abacus (we record that, as in \cite[Section 3]{OlssonBook}, when calculating $e$-quotients we adopt the convention of using abaci which have a multiple of $e$ many beads).
For our purposes, it is sufficient to note the following key property.

\begin{proposition}[{\cite[Theorem 3.3]{OlssonBook}}]\label{prop:quotient-hooks}
	Let $e\in\N$ and $\lambda$ be a partition. There is a canonical bijection $f$ from the multiset of hooks of $\lambda$ of length divisible by $e$, to the multiset of hooks in $Q_e(\lambda)$ (meaning the union of the multisets of hooks in $\lambda^{(0)},\dotsc,\lambda^{(e-1)}$). Moreover, $|H|=e\cdot |f(H)|$ for each such hook $H$ of $\lambda$, and \(Q_e(\la \setminus H) = Q_e(\la) \setminus f(H)\).
\end{proposition}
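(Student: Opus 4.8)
The proposition is classical (this is essentially \cite[Theorem~3.3]{OlssonBook}), and the plan is to deduce it from James's abacus. First I would fix notation: choose a number of beads $m$ that is a multiple of $e$ and at least the number of parts of $\la$ (this is the convention the paper adopts for computing $e$-quotients), pad $\la$ with zero parts to have $m$ parts, and form the beta-set $\{\,\la_i + m - i : 1 \le i \le m\,\}$, displayed on an abacus with $e$ runners, where runner $r$ (for $0 \le r \le e-1$) carries exactly the positions congruent to $r$ modulo $e$. The one substantial ingredient, which I would quote rather than reprove (see \cite[Section~2.7]{JK} or \cite[Section~1]{OlssonBook}), is the classical dictionary between hooks and bead moves: the hooks of $\la$ correspond bijectively to the pairs $(b,a)$ with $b$ an occupied position, $a$ an empty position, and $a < b$; the hook associated to $(b,a)$ has length $b-a$; and it is removed from $\la$ by sliding the bead from position $b$ to position $a$, leaving all other beads fixed. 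For a fully self-contained argument this dictionary would be the main obstacle; granted it, everything else is bookkeeping about division by $e$.

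Next I would observe that a hook of $\la$ has length divisible by $e$ precisely when the pair $(b,a)$ describing it lies on a single runner, and recall that the $e$-quotient is defined so that, for each $r$, dividing every position on runner $r$ by $e$ produces a beta-set for $\la^{(r)}$. I would then define $f$ thus: a hook $H$ of $\la$ with $e \mid |H|$ is a pair $(b,a)$ on some runner $r$; writing $b = eb_0 + r$ and $a = ea_0 + r$, the pair $(b_0,a_0)$ is a bead-and-gap pair on the rescaled runner $r$, i.e.\ a hook of $\la^{(r)}$, and I set $f(H)$ to be this hook. Then $|H| = b - a = e(b_0 - a_0) = e\cdot|f(H)|$, and $f$ is a bijection because rescaling each runner is a bijection onto a standard abacus; its inverse lifts a hook of any $\la^{(s)}$, viewed as a bead-and-gap pair, back to the corresponding pair on runner $s$ of the abacus of $\la$. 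These checks are routine.

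Finally, for the identity $Q_e(\la\setminus H) = Q_e(\la)\setminus f(H)$: by the dictionary, removing $H$ performs the bead slide determined by $(b,a)$, which stays within runner $r$ and leaves every other runner untouched; after rescaling, this is exactly the bead slide on runner $r$ that removes $f(H)$ from $\la^{(r)}$, with $\la^{(s)}$ unchanged for $s \ne r$. For canonicity (independence of the choice of $m$), I would note that replacing $m$ by $m+e$ amounts, on each runner separately, to the standard beta-set operation of adjoining a new smallest bead and shifting the rest, which is well known to preserve the encoded partition; it therefore alters neither the partitions $\la^{(r)}$ nor, on any runner, the set of bead-and-gap pairs, hence neither the hooks being matched nor the correspondence $f$. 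In short, once the classical hook/bead-move dictionary is in hand, the whole proposition follows by tracking positions through division by $e$; that dictionary is the only step that is not routine.
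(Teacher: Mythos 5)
The paper offers no proof of this proposition, citing it directly to \cite[Theorem 3.3]{OlssonBook}; your argument is precisely the standard abacus/beta-set proof given in that reference (and in \cite[Section 2.7]{JK}), correctly reducing everything to the hook--bead-move dictionary and the observation that hooks of length divisible by $e$ are exactly the single-runner bead--gap pairs. The details you supply — the length computation $|H| = e(b_0 - a_0)$, the compatibility of hook removal with the runner-wise bead slide, and the independence of the choice of bead number modulo $e$ — are all correct and complete.
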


In particular, the \(e\)-quotient has size \(|Q_e(\lambda)| = |\lambda^{(0)}|+|\lambda^{(1)}|+\dotsc+|\lambda^{(e-1)}|\) equal to the number of hooks of $\lambda$ of length divisible by \(e\); that is, \(\bw_e(\la) = |Q_e(\lambda)|\).

For taking cores and quotients iteratively, we use the following notation.
For \(\bipf{\la}\) an \(e\)-tuple of partitions, we write \(C_r(\bipf{\la})\) for the \(e\)-tuple obtained by replacing every partition with its \(r\)-core, and we write \(Q_r(\bipf{\la})\) for the \(er\)-tuple obtained by replacing every partition with its \(r\)-quotient and concatenating.
The following facts can be deduced from \Cref{prop:quotient-hooks}, or can be seen readily from their construction on James's abacus (indeed, part (ii) can be strengthened to the observation that \(Q_r(Q_e(\la))\) is a permutation of \(Q_{er}(\la)\)).

\begin{proposition}
\label{prop:iterated_cores_and_quotients}
	Let \(n \in \N\), \(\la \in \cP(n)\) and \(e, r \geq 2\).
	Then
	\begin{enumerate}[(i)]
	    \item\label{item:corequot_vs_quotcore} \(C_r(Q_e(\la)) = Q_e(C_{er}(\la))\);
	    \item \(\abs{Q_r(Q_e(\la))} = \abs{Q_{er}(\la)}\).
	\end{enumerate}
\end{proposition}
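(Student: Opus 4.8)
The plan is to reduce everything to a single, clean combinatorial statement about James's abacus, since both parts are really statements about how beads distribute among runners. First I would fix a bead configuration for $\la$ on an abacus with $N$ runners, where $N$ is a multiple of $er$ large enough to accommodate $\la$ (this is legitimate by the convention recorded after \Cref{prop:quotient-hooks}). Passing to the $e$-quotient $Q_e(\la)$ amounts to grouping the $N$ runners into $e$ bundles of $N/e$ consecutive positions modulo $e$, and reading off each component partition $\la^{(k)}$ from the beads on runner $k$; crucially, removing an $e$-hook corresponds to sliding a single bead up one position within its runner, so the hooks of $\la$ of length divisible by $e$ are in canonical bijection with the hooks of the components of $Q_e(\la)$ — this is exactly \Cref{prop:quotient-hooks}, which I would invoke rather than reprove.

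For part (i), the key observation is that taking $r$-cores componentwise and taking the $er$-core commute because both are characterised by the property ``remove all hooks of the relevant length.'' Concretely: a hook $H$ of $\la$ has length divisible by $er$ if and only if, under $f$ of \Cref{prop:quotient-hooks}, the corresponding hook $f(H)$ in $Q_e(\la)$ has length divisible by $r$. Hence iteratively removing all $er$-hooks from $\la$ (to form $C_{er}(\la)$) corresponds, under the bijection, to iteratively removing all $r$-hooks from each component of $Q_e(\la)$ (to form $C_r(Q_e(\la))$). Using the functoriality clause $Q_e(\la\setminus H) = Q_e(\la)\setminus f(H)$ from \Cref{prop:quotient-hooks}, an induction on $\bw_{er}(\la)$ then gives $Q_e(C_{er}(\la)) = C_r(Q_e(\la))$. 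The only thing to check carefully is that the process terminates at the right place: $C_r(Q_e(\la))$ has no hooks of length divisible by $r$ in any component, so by the bijection $Q_e(C_{er}(\la))$ has no such hooks either, whence $C_{er}(\la)$ has no hooks of length divisible by $er$; since $e$-quotients determine a partition given its $e$-core, and the $e$-core is unchanged throughout (removing $er$-hooks, a fortiori $e$-hooks, leaves $C_e$ fixed — and $C_e(C_{er}(\la)) = C_e(\la) = C_e(C_r(Q_e(\la)))$ via the abacus), we conclude the two partitions agree.

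For part (ii), I would argue directly on cardinalities via the weight interpretation $\bw_e(\la) = |Q_e(\la)|$ recorded after \Cref{prop:quotient-hooks}. We have $|Q_r(Q_e(\la))| = \sum_{k=0}^{e-1}|Q_r(\la^{(k)})| = \sum_{k=0}^{e-1}\bw_r(\la^{(k)})$, which counts the hooks in the components of $Q_e(\la)$ whose length is divisible by $r$; by \Cref{prop:quotient-hooks} this equals the number of hooks of $\la$ whose length is divisible by $e$ \emph{and} whose image under $f$ has length divisible by $r$, i.e.\ the number of hooks of $\la$ of length divisible by $er$, which is $\bw_{er}(\la) = |Q_{er}(\la)|$. (The parenthetical strengthening — that $Q_r(Q_e(\la))$ is an outright permutation of $Q_{er}(\la)$ — would follow from the abacus construction by tracking which runner-of-a-runner a bead lands on, but is not needed for the stated equality.) The main obstacle, such as it is, is bookkeeping: being precise about the runner-bundling conventions so that ``length divisible by $er$'' on $\la$ matches ``length divisible by $r$ after taking $e$-quotient,'' and making sure the induction in part (i) is anchored by the invariance of the $e$-core. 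Both are routine once the abacus picture is set up, so I would keep the exposition brief and lean on \Cref{prop:quotient-hooks,prop:iterated_cores_and_quotients}'s statement of the relevant facts.
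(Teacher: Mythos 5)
Your proposal is correct and follows essentially the route the paper intends: the paper gives no formal proof, simply noting that both parts "can be deduced from \Cref{prop:quotient-hooks}, or can be seen readily from their construction on James's abacus," which is exactly the bijection-plus-compatibility argument you carry out (your part (ii) count of hooks of length divisible by $er$, and your part (i) induction using $Q_e(\la\setminus H)=Q_e(\la)\setminus f(H)$ together with uniqueness of $r$-cores). The only quibble is that your closing appeal to "$e$-quotients determine a partition given its $e$-core" is unnecessary for (i), since the claim is an equality of $e$-tuples and componentwise uniqueness of the $r$-core already finishes the argument.
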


\medskip

\subsection{Blocks and defects}\label{subsec:blocks}

Let us now fix a prime number $p$ and a finite group $G$. Let $B$ be a $p$-block of $G$. The \emph{defect} of $B$ is the non-negative integer $d$ such that $|D| = p^d$, where $D$ is a defect group of $B$. If $B$ has defect $d$ and $\chi\in\Irr(B)$, then the \emph{$p$-height} of $\chi$ is the non-negative integer \(h_p(\chi)\) such that 
\begin{equation*}
	\nu_p(\chi(1)) = a-d+h_p(\chi)
\end{equation*}
where \(p^a\) is the order of a Sylow \(p\)-subgroup of \(G\).

It is well known that $\chi^\lambda$ and $\chi^\mu$ lie in the same $p$-block of $S_n$ if and only if $C_p(\lambda)=C_p(\mu)$ (see \cite[6.1.21]{JK}, for instance). In other words, the $p$-blocks of $S_n$ are naturally parametrised by $p$-core partitions.
The defect groups of the \(p\)-block containing \(\chi^\la\) are the Sylow $p$-subgroups of $S_{p\bw_p(\la)}$, viewed as subgroups of $S_n$ in the natural way (by permuting $p\bw_p(\lambda)$ of the $n$ points on which $S_n$ acts) (see \cite[6.2.45]{JK}).
Thus the defect of the block containing \(\chi^\la\) is \(\nu_p((p\bw_p(\la))!)\).

The block decomposition for the alternating group is discussed in \cite{OlssonBlocks}.
If $B$ is a $p$-block of $S_n$ of positive defect, then $B$ covers a unique block $b$ of $A_n$.
Moreover, the defect groups of $b$ are all the $A_n$-conjugates of $D\cap A_n$, where $D$ is a defect group of $B$.
In particular, the defect groups of $b$ and $B$ are isomorphic unless $p=2$ and $B$ has positive defect, in which case $|D: D\cap A_n|=2$.

\medskip

\subsection{The Murnaghan--Nakayama rule}

The \emph{\MN rule} (stated below; see also \cite[2.4.7]{JK}) describes how to recursively compute any character value in terms of hook lengths. 
The \emph{leg length} $\leglen(H_{i,j}(\lambda))$ of a hook \(H_{i,j}(\lambda)\) is equal to one less than the number of rows of $Y(\lambda)$ it occupies; in other words, $\leglen(H_{i,j}(\lambda))=\lambda'_j-i$.

\begin{theorem}[\MN rule]\label{thm:MN}
	Let $e,n\in\N$ with $e<n$, and let \(\la \in \cP(n)\).
    Let $\rho \in S_n$ be an $e$-cycle and let $\pi \in S_n$ be a permutation of the remaining $n-e$ numbers. Then
\[
    \chi^\lambda(\pi\rho) = \sum_\mu (-1)^{\leglen(\lambda\setminus\mu)}\chi^\mu(\pi)
\]
	where the sum runs over all partitions $\mu$ obtained from $\lambda$ by removing an $e$-hook.
    In particular, if $\lambda$ has no $e$-hooks, then $\chi^\lambda(\pi\rho)=0$.
\end{theorem}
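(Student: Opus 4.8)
The plan is to reduce the statement, via the Frobenius characteristic map, to a single identity in the ring $\Lambda$ of symmetric functions, and then to establish that identity by differentiating a Jacobi--Trudi determinant. Recall that if $\sigma\in S_n$ has cycle type $\alpha$, then $\chi^\la(\sigma) = \langle s_\la, p_\alpha\rangle$, where $\langle\,,\,\rangle$ is the Hall inner product, $s_\la$ is the Schur function indexed by $\la$, and $p_\alpha = \prod_i p_{\alpha_i}$ is a product of power sums. Since $\pi$ and $\rho$ have disjoint supports, $\pi\rho$ has cycle type $(e)\cup\beta$ where $\beta$ is the cycle type of $\pi$, and so
\[ \chi^\la(\pi\rho) = \langle s_\la, p_e\,p_\beta\rangle = \langle p_e^\perp s_\la, p_\beta\rangle, \]
where $p_e^\perp$ denotes the operator on $\Lambda$ adjoint to multiplication by $p_e$. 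Thus the theorem follows at once from the symmetric-function identity
\[ p_e^\perp s_\la = \sum_{\mu} (-1)^{\leglen(\la\setminus\mu)}\, s_\mu, \]
the sum being over all partitions $\mu$ obtained from $\la$ by removing an $e$-hook: pairing both sides with $p_\beta$ and using $\langle s_\mu, p_\beta\rangle = \chi^\mu(\pi)$ yields the displayed formula, and when $\la$ has no removable $e$-hook the right-hand side is empty, giving $\chi^\la(\pi\rho) = 0$.

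To prove this identity I would invoke two standard facts about $\Lambda = \Q[p_1, p_2, \dotsc]$: the operator $p_e^\perp$ is a derivation (indeed $p_e^\perp = e\,\partial/\partial p_e$), and it acts on the complete homogeneous symmetric functions by $p_e^\perp h_k = h_{k-e}$, with the convention $h_j = 0$ for $j < 0$ and $h_0 = 1$ (immediate from $\sum_k h_k t^k = \exp(\sum_{j\ge 1} p_j t^j / j)$). Applying $p_e^\perp$ to the Jacobi--Trudi expression $s_\la = \det\bigl(h_{\la_i - i + j}\bigr)_{1 \le i, j \le \ell}$, where $\ell$ is the number of parts of $\la$, and expanding the derivation over the rows, the $r$th summand is the determinant of the matrix obtained by replacing $\la_r$ with $\la_r - e$ throughout. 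Such a determinant is either $0$ — when the shifted parts $\la_i - i$ are no longer pairwise distinct, so that two rows coincide — or, after permuting rows to restore decreasing order, equals $\pm s_\mu$ for the partition $\mu$ whose shifted parts $\mu_i - i$ form the sorted sequence, the sign being the sign of the sorting permutation.

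The \emph{main obstacle}, and the combinatorial heart of the argument, is to pin down exactly which $\mu$ occur and with which sign. Passing to the beta-set (James abacus) description, in which $\la$ is recorded by the bead positions $\{\la_i - i : 1 \le i \le \ell\}$, replacing $\la_r$ by $\la_r - e$ amounts to sliding a single bead up by $e$ places; this produces a valid beta-set — hence a partition $\mu$, necessarily with $\abs{\mu} = n - e$ — precisely when the target slot is unoccupied, and that is exactly the condition under which removing an $e$-hook from $\la$ yields $\mu$. Moreover, the number of adjacent transpositions needed to re-sort the sequence equals the number of beads lying strictly between the old and new positions of the moved bead, and this count is precisely $\leglen(\la\setminus\mu)$, that is, one less than the number of rows occupied by the removed $e$-hook. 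Feeding this bead bookkeeping back into the sum over rows $r$ reproduces exactly the sum over removable $e$-hooks with the asserted signs, completing the proof.

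An essentially equivalent route avoids $\Lambda$ and works directly from the Frobenius formula, extracting the coefficient of $x^{\la+\delta}$ (with $\delta = (n-1, \dotsc, 1, 0)$) in $p_e(x)\,a_\delta(x)\prod_k p_k(x)^{\beta_k}$, where $a_\delta$ is the Vandermonde determinant: multiplying by $p_e = \sum_i x_i^e$ shifts one exponent by $e$, and the same sign computation resurfaces when one restores antisymmetry. A third, purely representation-theoretic route inducts on $e$ from the branching rule (the case $e = 1$), using the Littlewood--Richardson decomposition $\chi^\la\down_{S_{n-e}\times S_e} = \sum_{\nu,\kappa} c^\la_{\nu\kappa}\, \chi^\nu\boxtimes\chi^\kappa$ together with the value of $\chi^\kappa$ on an $e$-cycle (which is $(-1)^s$ for $\kappa = (e-s, 1^s)$ and $0$ otherwise); this merely trades the sign computation for the identity $\sum_{s} (-1)^s c^\la_{\nu,(e-s,1^s)} = (-1)^{\leglen(\la\setminus\nu)}$ (valid when $\la\setminus\nu$ is an $e$-hook, and $0$ otherwise), of comparable difficulty.
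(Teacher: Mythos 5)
The paper does not prove this statement at all: it is quoted as a classical background result, cited to James--Kerber \cite[2.4.7]{JK}, so there is no in-paper argument to compare yours against. Your proposal is a correct outline of one of the standard proofs. The reduction via the Frobenius characteristic to the identity $p_e^\perp s_\la = \sum_\mu (-1)^{\leglen(\la\setminus\mu)} s_\mu$ is right, as are the two facts you invoke ($p_e^\perp = e\,\partial/\partial p_e$ is a derivation, and $p_e^\perp h_k = h_{k-e}$), and the row-by-row differentiation of the Jacobi--Trudi determinant together with the beta-set/abacus bookkeeping (sign of the sorting permutation $=$ parity of the number of beads jumped over $=$ parity of the leg length) is exactly the classical argument. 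One small point to tighten: your dichotomy ``the modified determinant is zero iff two rows coincide'' omits the case where the moved index drops so low that the entire $r$th row consists of $h_j$ with $j<0$ and is therefore identically zero (on the abacus, the bead would slide off the bottom); this happens precisely when there is no corresponding removable $e$-hook, so it is consistent with your conclusion, but it should be stated, or absorbed by padding the beta-set with enough extra beads so that every length-$e$ slide stays on the abacus. The alternative routes you mention (Frobenius's coefficient extraction from $p_e\,a_\delta\,\prod p_k^{\beta_k}$, or induction via Littlewood--Richardson) are likewise standard and would work, but none of this is needed for the paper, which takes the rule as given.
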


The following result from \cite[Proposition 3.13]{OlssonBook} is fundamental in understanding the parities of leg lengths in hook removal processes on partitions.

\begin{lemma}\label{lem:path-sign}
	Let $e\in\N$. Suppose $\lambda$ and $\mu$ are two partitions such that $\mu$ is obtained from $\lambda$ by removing a sequence of $r$ many $e$-hooks for some $r\in\N_0$, having leg lengths $b_1,b_2,\dotsc,b_r$ respectively. Let $b=\sum_i b_i$. Then the residue of $b$ (mod 2) does not depend on the choice of $e$-hooks being removed in going from $\lambda$ to $\mu$.
\end{lemma}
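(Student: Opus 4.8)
The plan is to translate the hook‑removal process onto James's abacus, where removing an \(e\)-hook becomes a single bead move, and then to reinterpret the parity of \(b\) as a crossing number in a strand diagram, which is visibly a topological invariant depending only on the endpoints \(\lambda\) and \(\mu\).

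Concretely, I would first fix \(N\) a multiple of \(e\) large enough that \(\lambda\) — and hence every partition occurring in the removal sequence — has an abacus display with \(N\) beads; I record the bead positions of a partition as a set of \(N\) distinct non‑negative integers, split among the \(e\) runners by residue modulo \(e\). Removing an \(e\)-hook amounts to picking a bead at a position \(x\) with \(x - e\) unoccupied and \(x - e \ge 0\) and sliding it to \(x - e\) (one row up on its runner); the leg length of this hook is exactly the number of beads currently lying strictly between positions \(x - e\) and \(x\). Drawing the whole process as a diagram of \(N\) strands — one per bead, time running downward — at each of the \(r\) steps exactly one strand moves left by \(e\) and the rest stay vertical, and by the previous sentence the leg length of a step equals the number of (stationary) strands that the moving strand crosses there. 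Hence \(b\) is the total number of crossings in the diagram.

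The crux is then to pin down which bijection from the beads of \(\lambda\) to the beads of \(\mu\) the diagram realises. Here the key observation is that a move never reverses the relative order of two beads on the same runner (a bead only ever slides into the adjacent empty slot above it on its runner), so the strands on each fixed runner preserve their vertical order throughout. Therefore the bijection \(\sigma\) realised by any removal sequence is forced: it is the unique bijection that is order‑preserving on each runner, and so depends only on \(\lambda\), \(\mu\) and \(N\). Using the standard fact that the number of crossings of two strands in such a monotone diagram has the same parity as the indicator of whether their relative order differs between top and bottom, I would conclude
\[ b \equiv \bigl|\{\, \{x,x'\} : \sgn(x - x') \ne \sgn(\sigma(x) - \sigma(x')) \,\}\bigr| \pmod 2, \]
with \(x, x'\) ranging over the bead positions of \(\lambda\) (pairs on a common runner contribute nothing, as \(\sigma\) is order‑preserving there). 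The right‑hand side does not depend on the chosen sequence of removals; a routine check shows it is also independent of \(N\), completing the argument.

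The main obstacle I expect is bookkeeping rather than conceptual: carefully verifying that the leg length of a single hook removal really does count exactly the beads lying strictly between the source and destination positions on the abacus (so that it matches the crossing count at that step), and verifying the order‑preservation claim for beads sharing a runner. Both are immediate from the explicit bead‑moving description of hook removal on James's abacus, but they must be checked cleanly for the crossing‑number reinterpretation to go through.
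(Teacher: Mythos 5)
Your proof is correct. Note that the paper does not actually prove this lemma itself --- it is quoted directly from Olsson's book (Proposition 3.13 there) --- so what you have written is a self-contained reconstruction rather than a parallel to an argument in the text. Your route is essentially the standard one in disguise: the quantity you end with, the number of pairs of beads whose relative order differs between \(\lambda\) and \(\mu\), is exactly the number of inversions of the permutation matching the sorted beta-sets of \(\lambda\) and \(\mu\), so its parity is the sign of that permutation; Olsson's proof tracks precisely this sign algebraically, showing each hook removal multiplies it by \((-1)^{\leglen}\), whereas you package the same bookkeeping topologically as a crossing count. The two facts you flag as needing care are both genuinely fine: the leg length of the hook corresponding to moving a bead from position \(x\) to \(x-e\) is the number of beads strictly between those positions (standard abacus fact), and since a bead only ever slides into an empty slot on its own runner it can never overtake a bead sharing its runner, so the bijection \(\sigma\) is forced to be the order-preserving one runner by runner. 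The one point worth stating explicitly, which you gesture at via ``standard fact,'' is that each crossing event genuinely transposes the relative order of the two strands involved (the stationary bead at \(y\) with \(x-e<y<x\) goes from being below the mover to above it) and no other event affects that pair, so the crossing count for each pair has parity equal to the order-reversal indicator; with that spelled out the argument is complete. The independence of \(N\) is not needed for the lemma, since \(N\) may simply be fixed once at the outset.
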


We introduce some notation to discuss different ways of removing successive hooks. 
	
\begin{notation}\label{not:path}
	Let $e\in\N$ and $\lambda$ and $\mu$ be two partitions such that $\mu$ may be obtained from $\lambda$ by successively removing some number of $e$-hooks. 
	We use the notation 
	\[ P:\lambda\xrightarrow{e}\mu \]
	to mean that $P$ is a specific sequence of hook removals, which we call a \emph{path}, starting at $\lambda$ and arriving at $\mu$ where each successive hook removed has length $e$.
	Equivalently, $P$ is a sequence of partitions $\big(\alpha(0),\alpha(1),\dotsc,\alpha(v)\big)$ for some $v\in\N_0$ satisfying the following: $\alpha(0)=\lambda$, $\alpha(v)=\mu$, and $\alpha(i+1)$ is obtained by removing an $e$-hook $H_i$ from $\alpha(i)$ for each $i\in\{0,1,\dotsc,v-1\}$ (that is, $\alpha(i+1)=\alpha(i)\setminus H_i$).
	
	For such a path $P$, we define the \emph{sign} of $P$, denoted $\sgn(P)$, to be
	\[ \sgn(P):= (-1)^{\sum_{i=0}^{v-1} \leglen(H_i)}. \]
\end{notation}

We observe by \cref{lem:path-sign} that the value of $\sgn(P)$ in fact depends only on $\lambda$, $\mu$ and $e$, and not on the path $P$ itself.
We thus define \(\sgn(\la\setminus\mu)\) to be the sign of any path \(P \colon \lambda \xrightarrow{e} \mu\) (where the parameter \(e\) will be understood from context).
We can therefore reformulate the \MN rule in terms of paths as follows.

\begin{theorem}[\MN rule reformulated]\label{thm:MN-reformulated}
Let $e,r,n\in\N$ with $er<n$, and let \(\la \in \cP(n)\).
Let $\rho \in S_n$ be a product of \(r\)-many disjoint $e$-cycles and let $\pi \in S_n$ be a permutation of the remaining $n-er$ numbers. Then
\[
    \chi^\lambda(\pi\rho) = \sum_\mu \sgn(\lambda\setminus\mu) \, |\{P \mid P\colon \la \xrightarrow{e} \mu \}| \, \chi^\mu(\pi)
\]
where the sum runs over all partitions $\mu$ obtained from $\lambda$ by removing a sequence of \(r\)-many $e$-hooks.
In particular, if $\lambda$ has no $e$-hooks, then $\chi^\lambda(\pi\rho)=0$.
\end{theorem}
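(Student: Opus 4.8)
The plan is to prove \Cref{thm:MN-reformulated} by induction on $r$, using the ordinary \MN rule (\Cref{thm:MN}) to strip off one $e$-cycle at a time. The base case $r=0$ is immediate: here $\rho$ is trivial, the only partition reachable from $\la$ by removing zero $e$-hooks is $\la$ itself, the unique path $\la\xrightarrow{e}\la$ of length $0$ has sign $+1$, and so both sides of the claimed identity equal $\chi^\la(\pi)$.

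For the inductive step, fix $r\geq 1$, assume the result for $r-1$, and write $\rho=\rho'\sigma$ where $\sigma$ is one of the $r$ disjoint $e$-cycles and $\rho'$ is the product of the remaining $r-1$; then $\pi\rho'$ permutes the $n-e$ numbers fixed by $\sigma$. Applying \Cref{thm:MN} to the $e$-cycle $\sigma$ and the permutation $\pi\rho'$ gives
\[ \chi^\la(\pi\rho) = \sum_\nu (-1)^{\leglen(\la\setminus\nu)}\,\chi^\nu(\pi\rho'), \]
the sum running over partitions $\nu$ of $n-e$ obtained from $\la$ by removing a single $e$-hook. Since $er<n$ forces $e(r-1)<n-e$, the inductive hypothesis applies to each term $\chi^\nu(\pi\rho')$, expressing it as $\sum_\mu \sgn(\nu\setminus\mu)\,\bigl|\{P'\mid P'\colon\nu\xrightarrow{e}\mu\}\bigr|\,\chi^\mu(\pi)$, where $\mu$ runs over partitions obtained from $\nu$ by removing \((r-1)\)-many $e$-hooks. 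Substituting and interchanging the order of summation writes $\chi^\la(\pi\rho)$ as $\sum_\mu c_\mu\,\chi^\mu(\pi)$ with $c_\mu = \sum_\nu (-1)^{\leglen(\la\setminus\nu)}\sgn(\nu\setminus\mu)\,\bigl|\{P'\colon\nu\xrightarrow{e}\mu\}\bigr|$, where $\nu$ ranges over the single-$e$-hook removals from $\la$.

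It then remains to show $c_\mu = \sgn(\la\setminus\mu)\,\bigl|\{P\mid P\colon\la\xrightarrow{e}\mu\}\bigr|$. A path $P\colon\la\xrightarrow{e}\mu$ of length $r$ has a well-defined first intermediate partition $\nu$ (a single-$e$-hook removal from $\la$), and deleting its first step leaves a path $\nu\xrightarrow{e}\mu$ of length $r-1$; prepending the step $\la\to\nu$ inverts this. This bijection gives $\bigl|\{P\colon\la\xrightarrow{e}\mu\}\bigr| = \sum_\nu\bigl|\{P'\colon\nu\xrightarrow{e}\mu\}\bigr|$ (terms with no path to $\mu$ contributing zero). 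For the signs, the first step contributes a factor $(-1)^{\leglen(\la\setminus\nu)}$ and the remaining steps contribute $\sgn(\nu\setminus\mu)$, so $\sgn(\la\setminus\mu) = (-1)^{\leglen(\la\setminus\nu)}\sgn(\nu\setminus\mu)$ whenever $\nu$ admits a path to $\mu$; this is exactly the path-independence of the total leg-length parity guaranteed by \Cref{lem:path-sign}. Pulling the now-constant factor $\sgn(\la\setminus\mu)$ out of the sum defining $c_\mu$ yields the desired formula, and the outer sum ranges over precisely the $\mu$ obtained from $\la$ by removing $r$-many $e$-hooks. Finally, if $\la$ has no $e$-hooks, then no $\mu$ is reachable by removing $r\geq 1$ many $e$-hooks, so the sum is empty and $\chi^\la(\pi\rho)=0$.

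The argument is mostly bookkeeping, and I expect the only delicate point to be keeping the signs consistent across the recursion: one must know that $\sgn(\la\setminus\mu)$ genuinely depends on the two endpoints alone and factors compatibly through an intermediate partition, which is precisely what \Cref{lem:path-sign} (together with \Cref{not:path}) provides. With that in hand, the double-sum manipulation and the path-factorisation bijection are routine.
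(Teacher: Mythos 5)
Your proposal is correct and is exactly the argument the paper intends: the paper states \Cref{thm:MN-reformulated} without a formal proof, treating it as the result of iterating \Cref{thm:MN} one $e$-cycle at a time and using \Cref{lem:path-sign} to replace the per-path signs by the common value $\sgn(\la\setminus\mu)$. Your induction on $r$, with the first-step factorisation of paths and the sign bookkeeping, simply writes out that implicit argument in full and is sound.
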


\bigskip

\section{Main results for the symmetric group}\label{sec:sym}

The goal of this section is to prove \Cref{thm:main-sym-explicit} by showing that the vanishing set on a Sylow subgroup determines the prime power weights of the labelling partition.
We begin by showing how to use the \MN rule to identify non-zero character values.

\begin{lemma}\label{lemma:unique_partition_implies_non-zero}
Let $e,r,n\in\N$ with $er<n$, and let \(\la \in \cP(n)\).
Let $\rho \in S_n$ be a product of \(r\)-many disjoint $e$-cycles and let $\pi \in S_n$ be a permutation of the remaining $n-er$ numbers.
    Suppose there is a unique partition $\mu$ that can be obtained from $\lambda$ by removing $r$ many $e$-hooks.
    Then $\chi^\lambda(\pi\rho)$ is a non-zero multiple of $\chi^\mu(\pi)$.
    In particular, \(\chi^\la(\rho) \neq 0\).
\end{lemma}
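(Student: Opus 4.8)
The plan is to apply the reformulated \MN rule (\Cref{thm:MN-reformulated}) directly to $\chi^{\la}$ evaluated at $\pi\rho$. Since $\rho$ is a product of $r$ disjoint $e$-cycles and $\pi$ permutes the remaining $n-er$ points, the hypotheses of \Cref{thm:MN-reformulated} hold verbatim, giving
\[
  \chi^{\la}(\pi\rho) \;=\; \sum_{\nu} \sgn(\la\setminus\nu)\,\bigl|\{P \mid P\colon \la \xrightarrow{e} \nu\}\bigr|\,\chi^{\nu}(\pi),
\]
where $\nu$ runs over all partitions obtainable from $\la$ by successively removing $r$ many $e$-hooks. By hypothesis this index set is the singleton $\{\mu\}$, so the right-hand side collapses to the single term $\sgn(\la\setminus\mu)\,\bigl|\{P \mid P\colon \la \xrightarrow{e} \mu\}\bigr|\,\chi^{\mu}(\pi)$.

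Next I would check that the scalar coefficient of $\chi^{\mu}(\pi)$ is a nonzero integer. Indeed $\sgn(\la\setminus\mu)\in\{+1,-1\}$ by the definition in \Cref{not:path} (which is well-defined by \Cref{lem:path-sign}), and the number of paths $\la \xrightarrow{e} \mu$ is at least $1$, since the assertion that $\mu$ is obtainable from $\la$ by successively removing $r$ many $e$-hooks is precisely the statement that such a path exists. Hence $\chi^{\la}(\pi\rho)$ is a nonzero integer multiple of $\chi^{\mu}(\pi)$, which is the first claim.

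For the ``in particular'' clause I would specialise to $\pi = 1$, the identity permutation (certainly a permutation of the $n-er$ remaining points; note $n-er\geq 1$ since $er<n$, so $\mu$ is a genuine partition of $n-er$). Then $\pi\rho = \rho$ and $\chi^{\mu}(\pi)=\chi^{\mu}(1)>0$, being the degree of an irreducible character, so $\chi^{\la}(\rho)$ is a nonzero multiple of a positive integer, hence nonzero.

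I do not anticipate any real obstacle here: everything reduces to the reformulated \MN rule and the well-definedness of $\sgn(\la\setminus\mu)$ furnished by \Cref{lem:path-sign}, both already available. The only step requiring a moment's attention is confirming that the path count is positive, which is immediate once one unwinds the meaning of ``can be obtained from $\la$ by removing $r$ many $e$-hooks''.
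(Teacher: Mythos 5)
Your proof is correct and follows exactly the route the paper takes: the paper's own proof is simply the observation that the claim is immediate from the reformulated Murnaghan--Nakayama rule (\Cref{thm:MN-reformulated}), and your write-up just makes the collapse of the sum to the single term for $\mu$, the sign being $\pm 1$, and the path count being at least $1$ explicit. Nothing further is needed.
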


\begin{proof}
This is clear from our reformulation of the \MN rule (\Cref{thm:MN-reformulated}).
\end{proof}

This suffices to determine the prime power weights from the vanishing set. 

\begin{theorem}
\label{thm:determine_weights}
	Let $n\in\N$, $\lambda\in\cP(n)$ and $e\in\N_{\ge 2}$. The $e$-weight $\bw_e(\lambda)$ of $\lambda$ is the maximal $w\in\N_0$ such that $\chi^\lambda$ is non-zero on an element of cycle type $(e^w,1^{n-ew})$.
\end{theorem}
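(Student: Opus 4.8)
The plan is to prove the two inequalities implicit in the statement separately. Writing $w_0 := \bw_e(\la)$, I would show (a) that $\chi^\la$ vanishes on every element of cycle type $(e^w, 1^{n-ew})$ with $w > w_0$, and (b) that $\chi^\la$ does \emph{not} vanish on a suitable element of cycle type $(e^{w_0}, 1^{n-ew_0})$. The only tool needed is the reformulated \MN rule (\Cref{thm:MN-reformulated}), together with the following bookkeeping consequence of \Cref{prop:quotient-hooks}: removing a single $e$-hook from a partition $\nu$ decreases its $e$-weight by exactly $1$, since $\bw_e(\nu\setminus H) = \abs{Q_e(\nu\setminus H)} = \abs{Q_e(\nu)} - 1$. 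From this it follows that the largest number of $e$-hooks that can be successively removed from $\la$ is exactly $w_0$, and — because any maximal such chain of removals terminates at a partition of $e$-weight $0$, i.e.\ at an $e$-core, and the $e$-core does not depend on the order of removal — the \emph{unique} partition obtainable from $\la$ by removing exactly $w_0$ many $e$-hooks is $C_e(\la)$.

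For part (a), fix $w > w_0$ (we may assume $ew \le n$, else there is nothing to check), let $g$ have cycle type $(e^w, 1^{n-ew})$, and write $g = \pi\rho$ with $\rho$ a product of $w$ disjoint $e$-cycles and $\pi$ the identity on the remaining points. Then \Cref{thm:MN-reformulated} expresses $\chi^\la(g)$ as a sum indexed by the partitions obtainable from $\la$ by removing $w$ successive $e$-hooks; by the bookkeeping above this index set is empty, so $\chi^\la(g) = 0$. For part (b), set $w = w_0$ and let $g = \pi\rho$ with $\rho$ a product of $w_0$ disjoint $e$-cycles and $\pi$ the identity on the remaining $n - ew_0$ points. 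Since $C_e(\la)$ is the unique partition reachable from $\la$ by removing $w_0$ many $e$-hooks, \Cref{lemma:unique_partition_implies_non-zero} (applied with $r = w_0$) gives that $\chi^\la(g)$ is a nonzero multiple of $\chi^{C_e(\la)}(1) > 0$, so $\chi^\la(g) \neq 0$. Combining (a) and (b) identifies the maximal $w$ as $w_0$.

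The one point requiring a little care is the boundary case $ew_0 = n$ (equivalently $C_e(\la) = \emptyset$), where \Cref{lemma:unique_partition_implies_non-zero} does not literally apply: here I would instead invoke the \MN rule in the form where the $e$-cycles exhaust all $n$ points and $\pi$ is the trivial permutation of $0$ points (this boundary case holds by the same reasoning as \Cref{thm:MN}), noting that $\emptyset$ is still the unique reachable target, so that — since all hook-removal paths to a fixed partition share the same sign by \Cref{lem:path-sign} — no cancellation occurs and $\chi^\la(g) \neq 0$. Beyond this, the argument is routine given the results already assembled; the substance is the weight-bookkeeping that pins the maximal removal chain to the unique $e$-core and thereby makes \Cref{lemma:unique_partition_implies_non-zero} applicable.
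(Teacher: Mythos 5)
Your proof is correct and follows essentially the same route as the paper: non-vanishing at $w = \bw_e(\lambda)$ via the uniqueness of the $e$-core as the endpoint of a maximal removal chain (i.e.\ \cref{lemma:unique_partition_implies_non-zero}), and vanishing for $w > \bw_e(\lambda)$ because no such removal chain exists. Your extra care with the boundary case $e\bw_e(\lambda) = n$, where the hypothesis $er < n$ of \cref{lemma:unique_partition_implies_non-zero} fails literally, is a point the paper glosses over, and your fix is the right one.
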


\begin{proof}
	The $e$-core of $\lambda$ is the unique partition that can be obtained from $\lambda$ by removing $\bw_e(\lambda)$ many $e$-hooks, so $\chi^\lambda$ is non-zero on elements of cycle type $(e^{\bw_e(\lambda)},1^{n-e\bw_e(\lambda)})$ by \cref{lemma:unique_partition_implies_non-zero}.
    Meanwhile, for any $r>\bw_e(\lambda)$, there is no partition that can be obtained from $\lambda$ by removing $r$ many $e$-hooks, so $\chi^\lambda$ vanishes on elements of cycle type $(e^r,1^{n-er})$ by \cref{thm:MN}.
\end{proof}

It remains now only to express our three desired pieces of information -- the \(p\)-part of the degree; the defect of the block; and the height -- in terms of the prime power weights.

\begin{lemma}
\label{lemma:ppart_from_weights}
Let \(n \in \N\), let \(p\) be a prime, and let \(\la \in \cP(n)\).
Suppose $n=\sum_{r\ge 0}b_rp^r$ is the $p$-adic expansion of $n$ (i.e.~so that $b_r\in\{0,1,\ldots,p-1\}$ for all $r\in\N_0$).
Then
\[
    \nu_p(\chi^\lambda(1)) = \frac{n - \sum_{r\ge 0}b_r}{p-1} - \sum_{i \geq 1}\bw_{p^i}(\la).
\]
\end{lemma}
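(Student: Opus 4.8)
The plan is to compute $\nu_p(\chi^\la(1))$ via the hook length formula, rewriting $\nu_p(n!)$ and $\nu_p(\prod_{h\in\cH(\la)} h)$ separately. By Legendre's formula (or the standard identity), $\nu_p(n!) = \sum_{i\geq 1}\lfloor n/p^i\rfloor = \frac{n - \sum_{r\geq 0}b_r}{p-1}$, which accounts for the first term on the right-hand side. So it remains to show that $\nu_p\!\left(\prod_{h\in\cH(\la)} h\right) = \sum_{i\geq 1}\bw_{p^i}(\la)$.

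For this, I would argue that for any multiset of positive integers $M$, one has $\nu_p\!\left(\prod_{m\in M} m\right) = \sum_{i\geq 1} \#\{m\in M : p^i \mid m\}$, simply by counting, for each $m$, its contribution $\nu_p(m) = \#\{i\geq 1 : p^i\mid m\}$ and swapping the order of summation. Applying this with $M = \cH(\la)$, the $i$-th summand becomes the number of hooks of $\la$ whose length is divisible by $p^i$, which is exactly $\bw_{p^i}(\la)$ by the definition of the $e$-weight recalled in the subsection on cores, quotients and weights (equivalently via \Cref{prop:quotient-hooks}). Combining the two computations and subtracting gives the stated formula.

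This proof is essentially a bookkeeping exercise, so there is no serious obstacle; the only points requiring a word of care are citing Legendre's formula in the precise form $\frac{n-\sum b_r}{p-1}$ (which follows from $\sum_{i\geq 1}\lfloor n/p^i\rfloor = \sum_{i\geq 1}\frac{n - (n \bmod p^i)}{p^i}$ and a short manipulation of the $p$-adic digits, or can simply be quoted), and making sure the interchange of summations is over a finite set so it is unproblematic — both sums are finite since hooks have bounded length and only finitely many $p^i$ divide any given hook length. I would state the elementary lemma $\nu_p(\prod m) = \sum_{i\geq 1}\#\{m : p^i\mid m\}$ inline rather than as a separate displayed lemma, and then conclude in one or two lines.
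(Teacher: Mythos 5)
Your proposal is correct and follows essentially the same route as the paper's proof: hook length formula, Legendre's formula for $\nu_p(n!)$, and the observation that $\nu_p\bigl(\prod_{h\in\cH(\la)}h\bigr)=\sum_{i\geq 1}\bw_{p^i}(\la)$ since $\bw_{p^i}(\la)$ counts hooks of length divisible by $p^i$. The only difference is that you spell out the digit-counting interchange of summations, which the paper leaves implicit.
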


\begin{proof}
By the hook length formula (\Cref{thm:hlf}) we have \(\nu_p(\chi^\la(1)) = \nu_p(n!) - \nu_p(\prod_{h \in \cH(\la)} h)\).
We have \(\nu_p(n!) = \frac{n - \sum_{r\ge 0}b_r}{p-1}\) by Legendre's formula.
Meanwhile, since \(\bw_{p^i}(\la)\) is the number of hooks of length divisible by \(p^i\), we have \(\nu_p(\prod_{h \in \cH(\la)} h) = \sum_{i \geq 1}\bw_{p^i}(\la)\).
\end{proof}

\begin{proof}[Proof of \Cref{thm:main-sym-explicit}]
Let \(\la \in \cP(n)\) be the partition labelling \(\chi\).
We write \(a = \nu_p(n!) = \frac{n - \sum_{r\ge 0}b_r}{p-1}\), where $n=\sum_{r\ge 0}b_rp^r$ is the $p$-adic expansion of $n$, so that \(|P| = p^a\).
\Cref{thm:determine_weights} shows that the prime power weights \(\bw_{p^i}(\la)\) can be explicitly determined from \(\Van(\chi\down_P)\).
The desired numbers can then be expressed as follows:
\begin{enumerate}[(i)]
    \item the \(p\)-part of the degree is \(\chi(1)_p = p^{a - \sum_{i \geq 1}\bw_{p^i}(\la)}\) by \Cref{lemma:ppart_from_weights};
    \item the defect of the block containing \(\chi\) is \(d = \nu_p( (p\bw_p(\la))! )\), since the defect groups are Sylow $p$-subgroups of \(S_{p\bw_p(\la)}\);
    \item the height of \(\chi\) is \(h_p(\chi) = d - \sum_{i \geq 1} \bw_{p^i}(\la)\), since it satisfies \(\chi(1)_p =p^{a-d + h_p(\chi)}\). \qedhere
\end{enumerate}
\end{proof}

\begin{proof}[Proof of \Cref{thm:main-sym-compare-pparts}]
Let \(\la,\mu \in \cP(n)\) be the partitions labelling \(\chi,\psi\) respectively.
Suppose \(\Van(\chi\down_P) \subseteq \Van(\psi\down_P)\).
Then \(\bw_{p^i}(\la) \geq \bw_{p^i}(\mu)\) for all \(i \geq 1\) by \Cref{thm:determine_weights}.
Then \(\chi(1)_p \leq \psi(1)_p\) by \Cref{lemma:ppart_from_weights}.
\end{proof}

\bigskip

\section{Main results for the alternating group}\label{sec:An}

We now turn our attention to alternating groups.
The goal of this section is to prove \Cref{thm:main-alt-explicit,thm:main-alt-compare-pparts}.
We quickly deal with the case of \(p\) odd, before detailing the explicit method of (almost) determining the \(2\)-height, defect and \(2\)-part of degree of an irreducible character from its vanishing set in \Cref{thm:alt-determine-weights,cor:alt-determine-data}.

We first observe that the vanishing set of an irreducible character of \(A_n\) is the same as the vanishing set of any one of its covering characters of \(S_n\) restricted to even permutations.

\begin{proposition}\label{prop:An-vanishing}
	Let $n\in\N$. Suppose $\chi\in\Irr(S_n)$ and $\eta\in\Irr(A_n)$ satisfy $[\chi\down_{A_n},\eta]\neq 0$. 
    Then for any \(g \in A_n\), we have $\chi(g)=0$ if and only if $\eta(g)=0$.
    That is, \(\Van(\eta) = \Van(\chi) \cap A_n\).
\end{proposition}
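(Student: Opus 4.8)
The plan is to reduce to the self-conjugate case and then deal by hand with one exceptional conjugacy class. Write $\chi = \chi^\lambda$ for the relevant partition $\lambda$ of $n$; since the statement only involves $\chi\down_{A_n}$, when $\lambda \neq \lambda'$ it is immaterial whether we take $\chi = \chi^\lambda$ or $\chi^{\lambda'}$, as these restrict identically to $A_n$. If $\lambda \neq \lambda'$ then $\chi\down_{A_n} = \eta \in \Irr(A_n)$, so $\chi(g) = \eta(g)$ for every $g \in A_n$ and there is nothing to prove. Hence assume $\lambda = \lambda'$, so that $\chi\down_{A_n} = \phi^{\lambda+}+\phi^{\lambda-}$ with, without loss of generality, $\eta = \phi^{\lambda+}$; fixing an odd permutation $\sigma \in S_n$, conjugation by $\sigma$ interchanges $\phi^{\lambda+}$ and $\phi^{\lambda-}$.

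Now fix $g \in A_n$ and split into two cases according to whether $\phi^{\lambda+}(g) = \phi^{\lambda-}(g)$. If they are equal — which happens in particular whenever the $S_n$-class of $g$ does not split in $A_n$, since then $g$ is $A_n$-conjugate to $\sigma^{-1}g\sigma$ — then $\chi(g) = \phi^{\lambda+}(g) + \phi^{\lambda-}(g) = 2\eta(g)$, and the desired equivalence $\chi(g) = 0 \iff \eta(g) = 0$ is immediate.

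The remaining case is $\phi^{\lambda+}(g)\neq\phi^{\lambda-}(g)$. Here I would invoke the classical evaluation of $\phi^{\lambda\pm}$ (for instance \cite[2.5.13]{JK}): these two characters agree on every class except those of cycle type $\pi(\lambda) = (h_{1,1}(\lambda), h_{2,2}(\lambda), \dotsc)$, the strictly decreasing (hence distinct) sequence of principal hook lengths of $\lambda$, and on such $g$ one has $\phi^{\lambda\pm}(g) = \tfrac12\bigl(\eps_\lambda \pm \sqrt{\eps_\lambda d}\,\bigr)$, where $d = \prod_i h_{i,i}(\lambda) \geq 1$ and $\eps_\lambda \in \{\pm 1\}$. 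Then $\chi(g) = \phi^{\lambda+}(g)+\phi^{\lambda-}(g) = \eps_\lambda \neq 0$, while $\eta(g) = \phi^{\lambda+}(g) = 0$ would force $\eps_\lambda d = 1$, hence $d = 1$, hence $\lambda = (1)$ and $n = 1$ — excluded, since then $A_n$ is trivial. So in this case neither $\chi(g)$ nor $\eta(g)$ vanishes and the equivalence holds vacuously; combining the cases gives $\Van(\eta) = \Van(\chi) \cap A_n$.

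The only substantial step is this last case: it genuinely requires input about the split classes of $A_n$, namely that $\phi^{\lambda+}$ and $\phi^{\lambda-}$ differ on precisely one cycle type and the arithmetic form of their values there. If one prefers to avoid quoting the explicit formula, one can instead argue that on a split class $\phi^{\lambda+}(g)$ and $\phi^{\lambda-}(g)$ are Galois conjugate over $\Q$, so that if they are distinct then both are irrational, in particular nonzero, while $\chi^\lambda(g)$ is a rational integer that a short \MN computation identifies as $\pm 1$ on the class $\pi(\lambda)$. Either route isolates and disposes of the same exceptional behaviour.
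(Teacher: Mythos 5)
Your proof is correct and follows essentially the same route as the paper's: reduce to the case $\lambda=\lambda'$ (the non-self-conjugate case being trivial since $\chi\down_{A_n}=\eta$ there) and then appeal to the classical description of the values of $\chi^\lambda$ and $\phi^{\lambda\pm}$ in \cite[Corollary 2.4.8 and Theorem 2.5.13]{JK}. The only difference is that the paper declares the conclusion to follow ``immediately'' from those results, whereas you write out the verification on the exceptional split class $(h_{1,1}(\lambda),h_{2,2}(\lambda),\dotsc)$ explicitly; that computation is accurate.
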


\begin{proof}
	The assertion is clear if $\chi\down_{A_n}\in\Irr(A_n)$. Otherwise, $\chi=\chi^\lambda$ where $\lambda=\lambda'\in\cP(n)$ with $n>1$, and $\eta\in\{\eta^{\lambda+}, \eta^{\lambda-}\}$. 
	In this case, the statement follows immediately from the description of the character values of $\chi^\lambda$ and of $\eta$ given in \cite[Corollary 2.4.8 and Theorem 2.5.13]{JK}.
\end{proof}

The cases of \(p\) odd in the statements of \Cref{thm:main-alt-explicit} and \Cref{thm:main-alt-compare-pparts}
then follow immediately from the corresponding statements for the symmetric group: for \(p\) odd, a Sylow \(p\)-subgroup of \(A_n\) is precisely a Sylow \(p\)-subgroup of \(S_n\), and the \(p\)-part of degree, defect of a \(p\)-block and \(p\)-height are the same for $\eta\in\Irr(A_n)$ as for $\chi\in\Irr(S_n)$ covering \(\eta\).

We now proceed with the case \(p=2\).
As for the symmetric group, our strategy will be to determine the \(2\)-power weights -- though here we will only be able to identify them up to two possibilities.

We would like to use \Cref{thm:determine_weights}, but the elements used in that theorem may not lie in \(A_n\).
Nevertheless, in the following proposition we identify a different non-zero character value that will get us most of the way towards our goal.
We remark that, compared to \Cref{thm:determine_weights}, the proof has become considerably more difficult, despite the small change to the hypothesis and the restriction to the case \(p=2\). 

\begin{proposition}\label{prop:weight_minus_one}
	Let $l,n\in\N$ and $\lambda\in\cP(n)$. Suppose $w=\bw_{2^{l}}(\lambda)$ is odd. Let $\pi\in S_n$ be a product of $(w-1)$ pairwise disjoint $2^{l}$-cycles. Then $\chi^\lambda(\pi)\ne 0$.
\end{proposition}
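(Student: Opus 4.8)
The plan is to exploit the Murnaghan--Nakayama rule, but instead of aiming for the full $e$-core (which is reached after $w$ hook removals and corresponds to an element with $w$ many $2^l$-cycles, which is an odd permutation since $2^l$-cycles are odd), we stop one step short. So let $e = 2^l$ and consider paths $P \colon \la \xrightarrow{e} \nu$ removing $w-1$ many $e$-hooks; since $w-1$ is even, any such $\nu$ together with the remaining points gives an element $\pi$ of $A_n$, matching the hypothesis. By \Cref{thm:MN-reformulated} we have
\[
    \chi^\la(\pi) = \sum_{\nu} \sgn(\la \setminus \nu)\,\bigl|\{P \colon \la \xrightarrow{e} \nu\}\bigr|\,\chi^\nu(1),
\]
the sum over partitions $\nu$ obtained from $\la$ by removing $w-1$ many $e$-hooks (here $\pi$ acts as the identity on the remaining $n - e(w-1)$ points, so $\chi^\nu(\pi\text{-residual}) = \chi^\nu(1) > 0$). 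The key point is that all the coefficients $\chi^\nu(1)$ are strictly positive, so it suffices to show the signed count $\sum_\nu \sgn(\la\setminus\nu)\,|\{P \colon \la\xrightarrow{e}\nu\}|$ does not vanish — and the cleanest way to do that is to show all the signs $\sgn(\la\setminus\nu)$ agree, i.e.\ that $\sgn(\la\setminus\nu)$ is independent of which $\nu$ (at distance $w-1$) we land on.

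To prove the signs all agree, I would argue via the $e$-quotient. By \Cref{prop:quotient-hooks}, removing $e$-hooks from $\la$ corresponds bijectively to removing ordinary ($1$-)hooks from the $e$-quotient $Q_e(\la)$, which is an $e$-tuple of partitions of total size $\bw_e(\la) = w$. Removing $w-1$ nodes (single boxes) from $Q_e(\la)$ leaves a single box somewhere; so the reachable $\nu$ correspond precisely to the choices of which one box of $Q_e(\la)$ to keep — equivalently, removing all of $Q_e(\la)$ except one corner-reachable box. Now the leg-length parity of an $e$-hook removal is recorded by a sign on the abacus, and the total sign of a path to $\nu$ is $(-1)$ to the sum of leg lengths; by \Cref{lem:path-sign} this depends only on $\la$, $\nu$, $e$. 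The remaining task is to compare $\sgn(\la \setminus \nu_1)$ and $\sgn(\la \setminus \nu_2)$ for two different surviving boxes: one can pass from a path to $\nu_1$ to a path to $\nu_2$ by removing that last box too (reaching $C_e(\la)$) and then adding back the other box, and the sign contributed by a single $e$-hook removal/addition between the two near-core partitions and the core is controlled by the position of the single box in $Q_e(\la)$. The content/row data of the surviving box in $Q_e(\la)$ controls this last sign, and I expect these to cancel in the comparison — or, failing that, to need a direct abacus computation showing the leg-length parity of the final removed $e$-hook is the same regardless of which box survives.

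The main obstacle is precisely this last sign comparison: a priori the sign $\sgn(\la\setminus\nu)$ could genuinely differ for different $\nu$ at distance $w-1$, in which case the signed sum might suffer cancellation and one would have to work harder (e.g.\ track the actual multiplicities $|\{P\colon\la\xrightarrow{e}\nu\}|$, which by \Cref{prop:quotient-hooks} equal standard-Young-tableau-type counts on $Q_e(\la)$ with one box removed, and compare the weighted sum $\sum_\nu \pm (\text{number of orderings})\,\chi^\nu(1)$). I suspect, however, that the oddness of $w$ is exactly what forces the signs to line up (or the multiplicity-weighted sum to be unbalanced): if the $e$-quotient could be "symmetrically" split so that positive and negative contributions matched, that would constrain $w$ to be even, contradicting the hypothesis. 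So the cleanest route is: reduce to the $e$-quotient, observe that landing at distance $w-1$ means deleting all but one box, and show that the parity of $\sum \leglen$ over such a path is independent of the surviving box — using \Cref{lem:path-sign} to route all paths through $C_e(\la)$ and reducing to a one-hook sign computation on the abacus — after which positivity of the $\chi^\nu(1)$ finishes the argument.
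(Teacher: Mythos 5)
Your setup (apply \Cref{thm:MN-reformulated} with $e=2^l$, note that the reachable partitions $\nu$ are exactly those whose $e$-quotient is a single box, and translate paths into box-removals on $Q_e(\la)$ via \Cref{prop:quotient-hooks}) matches the paper's, but the step you lean on — that the signs $\sgn(\la\setminus\nu)$ agree for all $\nu$ at distance $w-1$, so that positivity of the $\chi^\nu(1)$ finishes the proof — is false, and oddness of $w$ does not rescue it. Counterexample: $\la=(4,2)$, $l=1$, so $w=\bw_2(\la)=3$ is odd and $\pi$ is a product of two disjoint transpositions. The two reachable partitions are $(2)$ and $(1,1)$, and one computes $\sgn(\la\setminus(2))=+1$ (via $(4,2)\to(2,2)\to(2)$, leg lengths $0,0$) while $\sgn(\la\setminus(1,1))=-1$ (via $(4,2)\to(2,2)\to(1,1)$, leg lengths $0,1$). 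So genuine cancellation occurs: $\chi^\la(\pi)= (+1)\cdot 2\cdot\chi^{(2)}(1)+(-1)\cdot 1\cdot\chi^{(1,1)}(1)=2-1=1$, which is nonzero but not for the reason your argument would give. The fallback you mention only in passing (tracking the multiplicities) is in fact the whole proof, and you do not carry it out.

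The paper's argument controls the multiplicities, not the signs. Writing $Q_{2^l}(\la)=(\la^{(0)},\dots,\la^{(2^l-1)})$ with $w_i=|\la^{(i)}|$, the number of paths from $\la$ to the partition $\mu^{(i)}$ whose quotient is a single box in position $i$ equals $w_i\cdot\frac{(w-1)!}{w_0!\cdots w_{2^l-1}!}\prod_j\chi^{\la^{(j)}}(1)$ (a multinomial counting which component each removed box comes from, times standard-tableau counts). One then checks, using \Cref{prop:iterated_cores_and_quotients}, that all $\mu^{(i)}$ have the same $2$-power weights, hence by \cref{lemma:ppart_from_weights} their degrees share a common $2$-part: $\chi^{\mu^{(i)}}(1)=2^c m_i$ with $m_i$ odd. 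Factoring out everything common, $\chi^\la(\pi)$ is a nonzero constant times $\sum_i \varepsilon_i w_i m_i$, and this integer is odd because $\varepsilon_i\equiv m_i\equiv 1 \pmod 2$ and $\sum_i w_i = w$ is odd by hypothesis. So the signs $\varepsilon_i$ are never shown to agree — they are simply irrelevant modulo $2$ — and the oddness of $w$ enters exactly at this last congruence, not through any sign-alignment. To repair your proposal you would need to replace the sign-comparison step with this explicit multiplicity computation and the parity argument (or something equivalent); as written, the central claim fails.
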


\begin{remark}
    The assumption that $w$ is odd is necessary here.
    For example, let $n=4$, $l=1$ and $\lambda = (2,2)$.
    Then $w = \bw_2(\lambda) = 2$ is even, $\pi$ is a single transposition and $\chi^\lambda(\pi) = 0$.
\end{remark}

\begin{proof}
	Let the $2^{l}$-quotient of $\lambda$ be denoted by
	\[ Q_{2^{l}}(\lambda) = \big( \lambda^{(0)}, \lambda^{(1)}, \ldots, \lambda^{(2^{l}-1)}\big). \]
	For $i \in \{0, 1, \ldots, 2^{l}-1\}$, let $\mu^{(i)}$ be the partition with the same $2^{l}$-core as $\lambda$, but with $2^{l}$-quotient given by
	\[ Q_{2^{l}}(\mu^{(i)}) = \big( \emptyset, \ldots, \emptyset, (1), \emptyset, \ldots, \emptyset \big) \]
	where $(1)$ appears in the $i$th position.
	Observe that if we remove $(w-1)$-many $2^{l}$-hooks from $\lambda$, we necessarily arrive at $\mu^{(i)}$ for some $i$.
    Recalling the definition of paths from \cref{not:path}, we set \(\varepsilon_i = \sgn(\la \setminus \mu^{(i)})\) and use the \MN rule (\cref{thm:MN-reformulated}) to find
	\begin{align}\label{eq:mn_for_weight_minus_one}
		\chi^\lambda(\pi)
		&= \sum_{i\in\{0,1,\dotsc,2^{l}-1\}} \varepsilon_{i} \cdot \underbrace{\#\{\text{paths }\lambda\xrightarrow{2^{l}}\mu^{(i)} \}}_{=:\sP(i)} \cdot \chi^{\mu^{(i)}}(1).
	\end{align}
	
	Fix $i\in\{0,1,\dotsc,2^{l}-1\}$. We calculate $\sP(i)$, the number of different paths $P:\lambda\xrightarrow{2^{l}}\mu^{(i)}$. Clearly $\sP(i)=0$ if $\lambda^{(i)}=\emptyset$.
	Otherwise, observe that such a path $P$ is equivalent by \cref{prop:quotient-hooks} to a way of removing $(w-1)$-many boxes in total from the (Young diagrams of the) components of $Q_{2^{l}}(\la)$, one at a time leaving partitions at each step, until we arrive at $Q_{2^{l}}(\mu^{(i)})$. Letting $w_{j}:=|\lambda^{(j)}|$, we therefore have
	\begin{equation}\label{eq:P(i)}
		\sP(i) = \binom{w-1}{w_{0},\dotsc,w_{i-1},w_i-1,w_{i+1},\ldots,w_{2^{l}-1}}\cdot \prod_{j\in\{0,1,\ldots,2^{l}-1\}}\chi^{\lambda^{(j)}}(1),
	\end{equation}
	interpreting this as zero if $w_{i}=0$.
	This is because the first multinomial term describes the sequence of components where each successive box is removed from, whilst the second product term describes for each component $\lambda^{(j)}$ the number of paths $\lambda^{(j)}\xrightarrow{1}\emptyset$ (i.e.~removing boxes one by one whilst leaving a partition at each step), which is precisely the number of standard Young tableaux of shape $\lambda^{(j)}$ and is equal to $\chi^{\lambda^{(j)}}(1)$. Indeed, for each $j\ne i$ we must remove $w_{j}$ boxes altogether from the component $\lambda^{(j)}$, whilst from the component $\lambda^{(i)}$ itself we only remove $w_{i}-1$ boxes, but there is a natural bijection between paths $\lambda^{(i)}\xrightarrow{1}\emptyset$ and paths $\lambda^{(i)}\xrightarrow{1}(1)$ since $\lambda^{(i)}\ne \emptyset$.
	
	Expanding the multinomial term in \eqref{eq:P(i)} and rearranging, observe that $\sP(i)$ equals $w_i$ multiplied by a non-zero expression that does not depend on $i$, namely
	\[ \sP(i) = w_i \cdot \frac{(w-1)!}{w_0! \cdots w_i! \cdots w_{2^{l}-1}!} \prod_{j\in\{0,1,\dotsc,2^{l}-1\}}\chi^{\lambda^{(j)}}(1). \]

    We now turn our attention to the \(\chi^{\mu^{(i)}}(1)\) terms in the expression for \(\chi^\la(\pi)\) in \eqref{eq:mn_for_weight_minus_one}.
    We first claim that the \(\mu^{(i)}\) all have the same \(2\)-power weights.
    Indeed, the $\mu^{(i)}$ all have the same $2^{l}$-core (equal to the $2^{l}$-core of $\lambda$) and have $2^{l}$-quotients which differ only by permuting the components.
    Then we can use \Cref{prop:iterated_cores_and_quotients} to express the weights \(\bw_{2^r}(\mu^{(i)}) = \abs{Q_{2^r}(\mu^{(i)})}\) in terms of the \(2^l\)-cores and -quotients:
    for \(r \geq l\), we have \(\bw_{2^r}(\mu^{(i)}) = \abs{Q_{2^{r-l}}(Q_{2^l}(\mu^{(i)}))}\),
    while for \(r < l\): 
    \begin{align*}
    \bw_{2^r}(\mu^{(i)})
        &= \abs{C_{2^{l-r}}(Q_{2^r}(\mu^{(i)}))} + 2^{l-r}\abs{Q_{2^{l-r}}(Q_{2^r}(\mu^{(i)}))} \\
        &= \abs{Q_{2^r}(C_{2^l}(\mu^{(i)}))} + 2^{l-r}\abs{Q_{2^l}(\mu^{(i)})}.
    \end{align*}
    Hence $\bw_{2^r}(\mu^{(i)})$ is independent of $i$, for all $r$.
	Thus, by \cref{lemma:ppart_from_weights}, the $2$-parts of the degrees $\chi^{\mu^{(i)}}(1)$ are all equal.
	Let $c\in\N_0$ be such that for all $i$ we have $\chi^{\mu^{(i)}}(1) = 2^c m_i$ where $m_i$ is odd.
    
    Substituting into \eqref{eq:mn_for_weight_minus_one}, we have
	\begin{align*}
	    \chi^\lambda(\pi)
	    &= 2^c \frac{(w-1)!}{w_0! \cdots w_i! \cdots w_{2^{l}-1}!} \prod_{j\in\{0,1,\dotsc,2^{l}-1\}}\chi^{\lambda^{(j)}}(1)  \sum_{i\in\{0,1,\dotsc,2^{l}-1\}} \eps_i w_i m_i.
	\end{align*}
	Therefore, in order to conclude that $\chi^\lambda(\pi)\ne 0$, it suffices to show that the integer $\sum_{i} \eps_i w_i m_i$ is non-zero.
    We do this by showing that it is odd. Indeed, we have $\eps_i \equiv m_i \equiv 1 \pmod{2}$ and $w \equiv 1 \pmod{2}$ by hypothesis, so that
	\[ \sum_{i} \eps_i w_i m_i \equiv \sum_i w_i = w \equiv 1 \pmod{2}. \qedhere \]
\end{proof}

Equipped with \Cref{prop:weight_minus_one}, we are able to almost uniquely determine the \(2\)-power weights as follows.

\begin{definition}
\label{def:weight-estimates}
	Let \(n \in \N\) and let \(\eta \in \Irr(A_n)\). Let \(Q\) be a Sylow \(2\)-subgroup of \(A_n\). For each \(i \geq 1\) we define the non-negative numbers \(\auxw_{2^i}\) and \(\hatw_{2^i}\) from the vanishing set of \(\eta\) on \(Q\) as follows:
\begin{align*}
\auxw_{2^i} &:=
    \max\left\{ \sum_{j\ge i} 2^{j-i}b_j \ \middle\vert\ \ \parbox{154pt}{\(\eta(\sigma)\ne 0\) for some \(\sigma\in Q\) of cycle type \(\big( \dotsc, (2^2)^{b_2}, (2^1)^{b_1}, (2^0)^{b_0} \big)\) }\ \right\}
\intertext{and}
\hatw_{2^i} &:=
	\begin{cases}
		\auxw_{2^i} + 1 & \text{ if \(i=1\) and \(n-2\auxw_{2}\) is not a triangular number}, \\
		\auxw_{2^i} & \text{ otherwise.}
	\end{cases}
\end{align*}
\end{definition}

\begin{remark}
The number \(\hatw_{2^i}\) is a natural way to estimate the weight \(\bw_{2^i}(\la)\) of a partition \(\la\) labelling an irreducible character of \(S_n\) covering \(\eta\),
because \(\auxw_{2^i}\) counts the maximum number of \(2^i\)-hooks that are removed from \(\la\) when calculating \(\chi^\la(\sigma)\) for some \(\sigma \in Q\) with the \MN rule.
When \(i=1\), \(\hatw_2\) offers an improvement over \(\auxw_2\) by considering the fact that the \(2\)-core of a partition is necessarily of size a triangular number.
Nevertheless, for simplicity, the reader may prefer to ignore this improvement, and instead use the definition of \(\auxw_{2^i}\) as the definition of \(\hatw_{2^i}\).
The statements of \Cref{lemma:weight-estimates,thm:alt-determine-weights,cor:alt-determine-data} below still hold as written with either definition (and by the same proofs, with only one trivial change in the first); however, the conditions for the estimates to be known to be correct are met less frequently when the unimproved definition is used.
\end{remark}

\begin{lemma}
\label{lemma:weight-estimates}
Let \(n \in {\N}\) and let \(\eta \in \Irr(A_n)\). Let \(\la \in \cP(n)\) be such that \(\eta\) is covered by \(\chi^\la\).
For each $i\ge 1$, let \(\hatw_{2^i}\) be defined as in \Cref{def:weight-estimates}.
Then:
\begin{enumerate}[(i)]
    \item\label{item:upper-bound}
        \(\hatw_{2^i} \leq \bw_{2^i}(\la)\) for all \(i \geq 1\);
    \item \label{item:even-weight-lemma}
        if \(i \geq 1\) is such that \(\bw_{2^i}(\la)\) is even, then \(\hatw_{2^i} = \bw_{2^i}\);
    \item \label{item:odd-weight-lemma}
        if \(i \geq 1\) is such that \(\bw_{2^i}(\la)\) is odd, then \(\hatw_{2^i} \geq \bw_{2^i} - 1\);
	\item\label{item:two-odd-weights}
	    if distinct \(i,j \geq 1\) are such that \(\bw_{2^i}(\la), \bw_{2^j}(\la)\) are odd, then \(\hatw_{2^i} = \bw_{2^i}(\la)\) and \(\hatw_{2^j} = \bw_{2^j}(\la)\).
\end{enumerate}
\end{lemma}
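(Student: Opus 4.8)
The plan is to relate the quantities $\auxw_{2^i}$ to the weights $\bw_{2^i}(\la)$ by analysing which cycle types arise from products of disjoint $2^i$-cycles lying in $Q$, and then feeding in the non-vanishing results already established. First I would unwind the definition: an element $\sigma \in Q$ of cycle type $\big(\dotsc,(2^2)^{b_2},(2^1)^{b_1},(2^0)^{b_0}\big)$ contributes the value $\sum_{j\ge i} 2^{j-i}b_j$ to the max defining $\auxw_{2^i}$. I would observe that if $\chi^\la(\sigma) \neq 0$ then, by iterating the \MN rule (\Cref{thm:MN-reformulated}) removing all the cycles of length $\ge 2^i$ first as $2^i$-hooks (each $2^j$-cycle with $j \ge i$ being stripped as $2^{j-i}$ successive $2^i$-hooks), we must be able to remove $\sum_{j \ge i} 2^{j-i} b_j$ many $2^i$-hooks from $\la$; hence $\sum_{j\ge i}2^{j-i}b_j \le \bw_{2^i}(\la)$, giving $\auxw_{2^i} \le \bw_{2^i}(\la)$. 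For part \eqref{item:upper-bound} it then remains to handle the $+1$ in the $i=1$ case: if $\hatw_2 = \auxw_2 + 1$ then $n - 2\auxw_2$ is not triangular, but $n - 2\bw_2(\la) = |C_2(\la)|$ is always triangular (the $2$-core of any partition is a staircase $(k,k-1,\dotsc,1)$), so $\auxw_2 \neq \bw_2(\la)$, forcing $\auxw_2 \le \bw_2(\la) - 1$, i.e.\ $\hatw_2 \le \bw_2(\la)$.

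For the lower bounds I would use \Cref{prop:weight_minus_one} and \Cref{thm:determine_weights}. Given $i \ge 1$, a product of $r$ disjoint $2^i$-cycles is an even permutation precisely when $r(2^i - 1)$ is even, i.e.\ when $i \ge 2$ always, and when $i = 1$ iff $r$ is even; moreover such an element lies in a Sylow $2$-subgroup of $A_n$ (for $i\ge 2$ directly, for $i=1$ after noting a product of an even number of transpositions is a $2$-element of $A_n$; one must check it can be taken inside a fixed $Q$, which follows since all Sylow $2$-subgroups are conjugate and the cycle type is preserved, together with \Cref{prop:An-vanishing} so that the vanishing set of $\eta$ on $Q$ agrees with that of $\chi^\la$ on the even permutations). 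Now: if $\bw_{2^i}(\la)$ is even, \Cref{thm:determine_weights} gives that $\chi^\la$ is non-zero on cycle type $(2^i)^{\bw_{2^i}(\la)}$ when $i \ge 2$, and when $i=1$ this element is in $A_n$ since $\bw_2(\la)$ is even, so taking $b_i = \bw_{2^i}(\la)$ and all other $b_j = 0$ witnesses $\auxw_{2^i} \ge \bw_{2^i}(\la)$; combined with \eqref{item:upper-bound} this gives \eqref{item:even-weight-lemma}. If $\bw_{2^i}(\la)$ is odd, \Cref{prop:weight_minus_one} gives $\chi^\la \neq 0$ on a product of $\bw_{2^i}(\la) - 1$ disjoint $2^i$-cycles; this is an even number of cycles, hence an even permutation lying in a Sylow $2$-subgroup, so $\auxw_{2^i} \ge \bw_{2^i}(\la) - 1$, and since $\hatw_{2^i} \ge \auxw_{2^i}$ this proves \eqref{item:odd-weight-lemma}.

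The genuinely interesting case is \eqref{item:two-odd-weights}: two distinct indices $i < j$ with both $\bw_{2^i}(\la)$ and $\bw_{2^j}(\la)$ odd. The idea is that one odd weight can be used to ``fix the parity'' of the other: starting from a product of $\bw_{2^i}(\la) - 1$ disjoint $2^i$-cycles witnessing $\auxw_{2^i} \ge \bw_{2^i}(\la) - 1$, I want instead to exhibit a single element of $Q$ of cycle type with $b_i = \bw_{2^i}(\la)$ (and suitable $b_j$) on which $\chi^\la$ is non-zero, so that the contribution to $\auxw_{2^i}$ becomes the full $\bw_{2^i}(\la)$; the extra cycle of length $2^i$ is what was previously ``missing'' for parity, and it can now be afforded because the element is permitted to have an extra $2^j$-cycle making the overall permutation even. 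Concretely, I would argue via \Cref{prop:weight_minus_one} applied with $l = i$ but now combined with the existence of a $2^j$-hook in $\la$ (guaranteed by $\bw_{2^j}(\la) \ge 1$): remove one $2^j$-cycle (stripped as $2^{j-i}$ many $2^i$-hooks, but appearing as a genuine $2^j$-cycle in the permutation, contributing $2^{j-i}$ to the weighted sum for $\auxw_{2^i}$ while contributing $2^{j-i}$ to the $b_i$-count is wrong — rather it contributes $1$ to $b_j$ and hence $2^{j-i}$ to $\sum_{k\ge i}2^{k-i}b_k$), thereby building an even permutation in $Q$ realising $\sum 2^{k-i}b_k = \bw_{2^i}(\la)$; and symmetrically for $j$. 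I expect the main obstacle to be precisely this combinatorial bookkeeping: choosing the cycle type $\big(\dotsc,b_j,\dotsc,b_i,\dotsc\big)$ so that (a) the weighted sum $\sum_{k\ge i}2^{k-i}b_k$ hits $\bw_{2^i}(\la)$ exactly, (b) the permutation is even and sits inside the fixed Sylow $2$-subgroup $Q$, and (c) a \MN-style non-vanishing statement (a mild generalisation of \Cref{prop:weight_minus_one} allowing a mixture of $2^i$- and $2^j$-hooks, proved by the same $2^l$-quotient plus multinomial-coefficient parity argument) certifies $\chi^\la$ is non-zero there. The parity computation at the end should again reduce, modulo $2$, to $\sum_k w_k^{(i)} = \bw_{2^i}(\la) \equiv 1$, using that the relevant signs and odd-part factors are all $\equiv 1 \pmod 2$, exactly as in the proof of \Cref{prop:weight_minus_one}.
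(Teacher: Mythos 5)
Your treatment of parts (i)--(iii) is essentially the paper's argument: the upper bound via the \MN rule plus the triangular-number correction for \(i=1\), non-vanishing on all \(\bw_{2^i}(\la)\) cycles via \Cref{thm:determine_weights} when the weight is even, and on \(\bw_{2^i}(\la)-1\) cycles via \Cref{prop:weight_minus_one} when it is odd. (One slip there: a \(2^i\)-cycle is an \emph{odd} permutation for every \(i\ge 1\), so a product of \(r\) disjoint \(2^i\)-cycles is even iff \(r\) is even, not ``always when \(i\ge2\)''; this does not damage (ii), since \(r=\bw_{2^i}(\la)\) is even there, but the general claim is false.)

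The genuine gap is in part (iv). Your plan rests on a non-vanishing statement you do not prove: for the smaller exponent you take one \(2^j\)-cycle together with \(\bw_{2^i}(\la)-2^{j-i}\) many \(2^i\)-cycles, and you acknowledge that certifying \(\chi^\la\neq0\) there needs a ``mild generalisation'' of \Cref{prop:weight_minus_one} for mixed hook lengths. This is not a routine adaptation: stripping the \(2^j\)-cycle first, the different \(2^j\)-hook removals \(\la\setminus H\) enter with signs \((-1)^{\leglen(H)}\cdot\sgn(\la\setminus H \to C_{2^i}(\la))\) which are not obviously constant in \(H\), so cancellation must be ruled out by a genuinely new argument (relating the leg length of a \(2^j\)-hook to the signs of the corresponding \(2^i\)-hook path), and the multinomial-parity computation of \Cref{prop:weight_minus_one} does not transfer verbatim. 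Worse, your ``symmetrically for \(j\)'' step for the larger exponent would naturally use all \(\bw_{2^j}(\la)\) many \(2^j\)-cycles plus a single \(2^i\)-cycle, which requires \(\chi^{C_{2^j}(\la)}\) to be non-zero on a single \(2^i\)-cycle; this can fail (e.g.\ \(\chi^{(2,2)}\) vanishes on transpositions even though \(\bw_2((2,2))=2\)). The paper avoids all of this with one element that witnesses both indices simultaneously: with \(i>j\), take \(\rho\) a product of \emph{all} \(\bw_{2^i}(\la)\) many \(2^i\)-cycles and \(\pi\) a product of \emph{all} remaining \(\bw_{2^j}(\la)-2^{i-j}\bw_{2^i}(\la)\) many \(2^j\)-cycles; parity of the cycle counts puts \(\pi\rho\) in \(A_n\), the weighted sums equal \(\bw_{2^i}(\la)\) and \(\bw_{2^j}(\la)\) exactly, and non-vanishing follows from two applications of \Cref{lemma:unique_partition_implies_non-zero}, with no new \MN analysis needed. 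To complete your proof you would either need to prove the mixed-hook non-vanishing lemma (including the sign analysis above) or switch to a construction of this ``remove everything'' type.
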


\begin{proof}
Since \(\auxw_{2^i}\) counts the maximum number of \(2^i\)-hooks that are removed from \(\la\) when calculating \(\chi^\la(\sigma)\) for some \(\sigma \in Q\) with the \MN rule, part \Cref{item:upper-bound} is clear for \(i > 1\).
For \(i=1\), we know \(n-2\bw_2(\la)\) is the size of the \(2\)-core of \(\la\) and hence necessarily a triangular number; thus if \(n - 2\auxw_2\) is not a triangular number then \(\auxw_2 < \bw_2(\la)\) and hence \(\hatw_2 \leq \bw_2(\la)\), as required.

If \(\bw_{2^i}(\la)\) is even, a product of \(\bw_{2^i}(\la)\) disjoint \(2^i\)-cycles lies in \(A_n\) and \(\eta\) is nonzero on this product by \Cref{thm:determine_weights}, yielding part \Cref{item:even-weight-lemma}.
If \(\bw_{2^i}(\la)\) is odd, a product of \((\bw_{2^i}(\la)-1)\) disjoint \(2^i\)-cycles lies in \(A_n\) and \(\eta\) is nonzero on this product by \Cref{prop:weight_minus_one}, yielding part \Cref{item:odd-weight-lemma}.

For part \Cref{item:two-odd-weights}, without loss of generality suppose \(i > j\).
Let \(\rho\) be a product of \(\bw_{2^i}(\la)\) many \(2^i\)-cycles and let \(\pi\) be a product of \((\bw_{2^j}(\la) - 2^{i-j}\bw_{2^i}(\la))\) many \(2^j\)-cycles, all disjoint.
(Indeed, we can choose $\pi$ and $\rho$ to be disjoint since the number of moved points of $\rho$ is $n-|C_{2^i}(\lambda)|$, and of $\pi$ is $|C_{2^i}(\lambda)|-|C_{2^j}(\lambda)|$.)
Observe that \(\pi\rho \in A_n\), and that \(\eta(\pi\rho) \neq 0\): we have that \(\chi^\la(\pi\rho)\) is a non-zero multiple of \(\chi^{C_{2^i}(\la)}(\pi)\) by \Cref{lemma:unique_partition_implies_non-zero}, and \(C_{2^i}(\la)\) has \(2^j\)-weight \(\bw_{2^j}(C_{2^i}(\la)) = \bw_{2^j}(\la) - 2^{i-j}\bw_{2^i}(\la)\) (as can be verified using \Cref{prop:iterated_cores_and_quotients}\Cref{item:corequot_vs_quotcore}), so \(\chi^{C_{2^i}(\la)}(\pi) \neq 0\) by \Cref{lemma:unique_partition_implies_non-zero} again.
Thus the maximum in \Cref{def:weight-estimates} is attained by \(\pi\rho\).
\end{proof}

\begin{theorem}
\label{thm:alt-determine-weights}
Let \(n \in {\N}\) and let \(\eta \in \Irr(A_n)\). Let \(\la \in \cP(n)\) be such that \(\eta\) is covered by \(\chi^\la\).
For each $i\ge 1$, let \(\hatw_{2^i}\) be defined as in \Cref{def:weight-estimates}.
Then
\[
	\bw_{2^i}(\la) \in \{\, \hatw_{2^i},\, \hatw_{2^i}+1\,\}.
\]
Furthermore:
\begin{enumerate}[(a)]
	\item\label{item:odd-weight}
	    if \(\hatw_{2^j}\) is odd for any \(j \geq 1\), then \(\bw_{2^i}(\la) = \hatw_{2^i}\) for all \(i \geq 1\);
	\item\label{item:at-most-one-error}
	    \(\bw_{2^j}(\la) = \hatw_{2^j}+1\) for at most one \(j \geq 1\).
\end{enumerate}
\end{theorem}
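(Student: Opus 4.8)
The plan is to deduce all three assertions from the four parts of \Cref{lemma:weight-estimates} by elementary parity considerations; no further hook-removal or Murnaghan--Nakayama arguments are required, since the substantive work is already contained in \Cref{prop:weight_minus_one} and \Cref{lemma:weight-estimates}. For the range statement $\bw_{2^i}(\la) \in \{\hatw_{2^i},\hatw_{2^i}+1\}$: the lower bound $\hatw_{2^i} \leq \bw_{2^i}(\la)$ is \Cref{lemma:weight-estimates}\Cref{item:upper-bound}, while for the upper bound I would split on the parity of $\bw_{2^i}(\la)$. If $\bw_{2^i}(\la)$ is even then \Cref{lemma:weight-estimates}\Cref{item:even-weight-lemma} gives $\hatw_{2^i} = \bw_{2^i}(\la)$; if $\bw_{2^i}(\la)$ is odd then \Cref{lemma:weight-estimates}\Cref{item:odd-weight-lemma} gives $\hatw_{2^i} \geq \bw_{2^i}(\la) - 1$. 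In both cases $\bw_{2^i}(\la) \leq \hatw_{2^i}+1$.

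For part \Cref{item:odd-weight}, suppose $\hatw_{2^j}$ is odd for some $j \geq 1$. First I would pin down $\bw_{2^j}(\la)$: it cannot be even, since then \Cref{lemma:weight-estimates}\Cref{item:even-weight-lemma} would make $\hatw_{2^j} = \bw_{2^j}(\la)$ even; so $\bw_{2^j}(\la)$ is odd, and then \Cref{lemma:weight-estimates}\Cref{item:odd-weight-lemma} together with \Cref{lemma:weight-estimates}\Cref{item:upper-bound} forces $\hatw_{2^j} \in \{\bw_{2^j}(\la)-1,\bw_{2^j}(\la)\}$, of which only $\hatw_{2^j}=\bw_{2^j}(\la)$ has odd parity. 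Now fix any $i \geq 1$. If $\bw_{2^i}(\la)$ is even, then $\bw_{2^i}(\la)=\hatw_{2^i}$ by \Cref{lemma:weight-estimates}\Cref{item:even-weight-lemma}. If $\bw_{2^i}(\la)$ is odd and $i=j$, we have just shown $\bw_{2^i}(\la)=\hatw_{2^i}$; if $\bw_{2^i}(\la)$ is odd and $i \neq j$, then $i$ and $j$ are distinct indices at which the weight is odd, so $\bw_{2^i}(\la)=\hatw_{2^i}$ by \Cref{lemma:weight-estimates}\Cref{item:two-odd-weights}. Hence $\bw_{2^i}(\la)=\hatw_{2^i}$ for all $i \geq 1$, proving \Cref{item:odd-weight}.

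For part \Cref{item:at-most-one-error}, suppose towards a contradiction that $\bw_{2^{j_1}}(\la)=\hatw_{2^{j_1}}+1$ and $\bw_{2^{j_2}}(\la)=\hatw_{2^{j_2}}+1$ for two distinct indices $j_1 \neq j_2$. For each $j \in \{j_1,j_2\}$ we have $\hatw_{2^j} \neq \bw_{2^j}(\la)$, which by the contrapositive of \Cref{lemma:weight-estimates}\Cref{item:even-weight-lemma} forces $\bw_{2^j}(\la)$ to be odd. But then $j_1$ and $j_2$ are distinct indices at which the weight is odd, so \Cref{lemma:weight-estimates}\Cref{item:two-odd-weights} gives $\hatw_{2^{j_1}}=\bw_{2^{j_1}}(\la)$, contradicting $\bw_{2^{j_1}}(\la)=\hatw_{2^{j_1}}+1$. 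Thus at most one such $j$ exists. There is no genuine obstacle in this argument beyond keeping the parities straight and respecting the distinctness hypothesis of \Cref{lemma:weight-estimates}\Cref{item:two-odd-weights}; the one point I would be careful about is isolating the sub-case $i=j$ when invoking that part inside the proof of \Cref{item:odd-weight}.
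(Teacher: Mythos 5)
Your proposal is correct and follows essentially the same route as the paper: both deduce the range statement from parts (i)--(iii) of \Cref{lemma:weight-estimates}, and both prove (\ref{item:odd-weight}) and (\ref{item:at-most-one-error}) by the same parity arguments using parts (ii)--(iv), with your contradiction argument for (\ref{item:at-most-one-error}) being merely the contrapositive form of the paper's case analysis on the number of odd \(2\)-power weights.
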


\begin{proof}
The main statement follows immediately from the first three parts of \Cref{lemma:weight-estimates}.
	
For part \Cref{item:odd-weight},
suppose there exists \(j\) such that \(\hatw_{2^j}\) is odd.
From \Cref{lemma:weight-estimates}\Cref{item:even-weight-lemma},\Cref{item:odd-weight-lemma}, we deduce that \(\bw_{2^j}(\la)\) must be odd and equal to \(\hatw_{2^j}\).
Then if \(i \neq j\) is such that \(\bw_{2^i}(\la)\) is odd, by \Cref{lemma:weight-estimates}\Cref{item:two-odd-weights} we have that \(\bw_{2^i}(\la) = \hatw_{2^i}\) (while if \(\bw_{2^i}(\la)\) is even then we have \(\bw_{2^i}(\la) = \hatw_{2^i}\) already by \Cref{lemma:weight-estimates}\Cref{item:even-weight-lemma}).

Finally we show \Cref{item:at-most-one-error}.
By part \Cref{lemma:weight-estimates}\Cref{item:even-weight-lemma}, if the \(2\)-power weights \(\bw_{2^i}(\la)\) are even for all $i\ge 1$, then \(\bw_{2^i}(\la) = \hatw_{2^i}\) for all $i\ge 1$.
Meanwhile if there are at least two odd \(2\)-power weights, then we have that \(\bw_{2^j}(\la) = \hatw_{2^j}\) for all \(j\ge 1\) with \(\bw_{2^j}(\la)\) odd by \Cref{lemma:weight-estimates}\Cref{item:two-odd-weights}, and hence \(\bw_{2^j}(\la) = \hatw_{2^j}\) for all \(j\ge 1\) using \Cref{lemma:weight-estimates}\Cref{item:even-weight-lemma} again.
The only remaining situation is having exactly one odd \(2\)-power weight, in which only for the \(j\) such that \(\bw_{2^j}(\la)\) is odd is it possible to have \(\bw_{2^j}(\la) = \hatw_{2^j}+1\).
\end{proof}

\begin{definition}
	\label{def:alt-data-estimates}
	Let \(n \in \N\) and let \(\eta \in \Irr(A_n)\).
	Let \(m\) be the binary digit sum of \(n\) (that is, \(m = \sum_{i \geq 0} b_i\) where \(n = \sum_{i \geq 0} b_i2^i\) is the binary expansion of \(n\)).
	We define the non-negative numbers \(\hatnu\), \(\hatd\) and \(\hath\) in terms of the numbers \(\hatw_{2^i}\) as follows:
	\begin{enumerate}[(i)]
	    \item \(\hatnu = n-m - \sum_{i \geq 1} \hatw_{2^i}\); \vspace{3pt}
	    \item \(\hatd = \max \{\nu_2( (2\hatw_2)! ) - 1,\, 0\}\); \vspace{5pt}
	    \item \(\hath = \hatnu + \hatd - (n-m-1)\).
	\end{enumerate}
\end{definition}

\begin{corollary}
\label{cor:alt-determine-data}
	Let \(n \in \N\), \(n \geq 2\), and let \(\eta \in \Irr(A_n)\). Let $\lambda\in\cP(n)$ be such that $\eta$ is covered by $\chi^\lambda$.
	The \(2\)-part of the degree of \(\eta\), the defect \(d\) of the \(2\)-block in which \(\eta\) lies, and the \(2\)-height of \(\eta\) can all be determined up to two possibilities as follows:
	\begin{align*}
	    \eta(1)_2 &=
	            \begin{cases}
	            2^{\hatnu} & \text{ if \(\hatw_{2^i} = \bw_{2^i}(\la) \) for all \(i \geq 1\) and \(\la\) is not self-conjugate,} \\
	            2^{\hatnu-1} & \text{ otherwise;}
	            \end{cases} \\
	    d &=
	            \begin{cases}
	            \hatd & \text{ if \(\hatw_{2} = \bw_{2}(\la)\) or \(\hatw_2 = 0\),} \\
	            \hatd+1 & \text{ otherwise;}
	            \end{cases} \\
	    h_2(\eta) &=
	            \begin{cases}
	            \hath & \text{ if \(\hatw_2 > 0\), \(\hatw_{2^i} = \bw_{2^i}(\la) \) for all \(i \geq 2\), and \(\la\) is not self-conjugate,} \\
	            \hath-1 & \text{ otherwise.}
	            \end{cases}
	\end{align*}
\end{corollary}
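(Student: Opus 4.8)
The plan is to express all three quantities in terms of $\chi^\la(1)$ and the true $2$-power weights $\bw_{2^i}(\la)$, and then bound the corrections needed to pass from these to the computable estimates $\hatw_{2^i}$. I will use throughout that $\eta(1)=\chi^\la(1)$ if $\la\ne\la'$ while $\eta(1)=\chi^\la(1)/2$ if $\la=\la'$; that $\nu_2(\chi^\la(1))=(n-m)-\sum_{i\ge1}\bw_{2^i}(\la)$ by \Cref{lemma:ppart_from_weights}; and that $\nu_2(|A_n|)=n-m-1$ for $n\ge2$. Put $\delta:=\sum_{i\ge1}\bigl(\bw_{2^i}(\la)-\hatw_{2^i}\bigr)$. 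By \Cref{thm:alt-determine-weights} together with \Cref{thm:alt-determine-weights}\Cref{item:at-most-one-error}, each summand is $0$ or $1$ and at most one is nonzero, so $\delta\in\{0,1\}$, $\nu_2(\chi^\la(1))=\hatnu-\delta$, and (by \Cref{lemma:weight-estimates}\Cref{item:even-weight-lemma}) if $\delta=1$ the unique deviating index has odd weight.

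Two observations on self-conjugacy govern the case splits. First, if $\la=\la'$ then $\bw_{2^i}(\la)$ is even for every $i\ge1$: conjugation induces a hook-length-preserving involution on the hooks of $\la$, sending $H_{a,b}(\la)$ to $H_{b,a}(\la)$, whose fixed points are the diagonal hooks $H_{a,a}(\la)$, of odd length $2(\la_a-a)+1$; hence the hooks of even length---in particular those of length divisible by $2^i$---occur in pairs. With \Cref{lemma:weight-estimates}\Cref{item:even-weight-lemma} this forces $\hatw_{2^i}=\bw_{2^i}(\la)$ for all $i$ when $\la=\la'$, so $\delta=0$; equivalently, $\delta=1$ implies $\la\ne\la'$. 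Second, if $\bw_2(\la)=0$ then $\chi^\la$ has $2$-defect zero in $S_n$, so $\chi^\la(1)_2=|S_n|_2=2\,|A_n|_2>|A_n|_2$; since $\eta(1)\mid|A_n|$ this forces $\la=\la'$, whence $\eta(1)_2=|A_n|_2$ and $\eta$ lies in a block of defect $0$. In particular $\la\ne\la'$ implies $\bw_2(\la)\ge1$.

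The degree and defect formulas follow quickly. For the degree, $\nu_2(\eta(1))$ equals $\hatnu-\delta$ if $\la\ne\la'$ and $\hatnu-\delta-1$ if $\la=\la'$; since the case $\delta=1,\ \la=\la'$ does not occur, this is $\hatnu$ precisely when $\delta=0$ and $\la\ne\la'$ (i.e. when $\hatw_{2^i}=\bw_{2^i}(\la)$ for all $i$ and $\la$ is not self-conjugate) and $\hatnu-1$ otherwise, as claimed. For the defect: if $\bw_2(\la)=0$ then $\hatw_2=0$, $\hatd=\max\{\nu_2(0!)-1,0\}=0$, and $d=0$ by the second observation, matching the clause $\hatw_2=0$. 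If $\bw_2(\la)\ge1$, the $S_n$-block of $\chi^\la$ has positive defect $\nu_2((2\bw_2(\la))!)$, so by the block theory of \Cref{subsec:blocks} the $A_n$-block $b$ of $\eta$ has defect $d=\nu_2((2\bw_2(\la))!)-1$. When $\bw_2(\la)=\hatw_2$ (so $\hatw_2\ge1$) this is $\hatd$; when $\bw_2(\la)=\hatw_2+1$ the first observation forces $\bw_2(\la)$ odd and $\hatw_2$ even, and if in addition $\hatw_2>0$ then $d-\hatd=\nu_2\bigl((2\hatw_2{+}1)(2\hatw_2{+}2)\bigr)=\nu_2\bigl(2(\hatw_2{+}1)\bigr)=1$, so $d=\hatd+1$. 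This matches the stated defect formula.

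Finally, from $\nu_2(\eta(1))=\nu_2(|A_n|)-d+h_2(\eta)$ and the definition of $\hath$,
\[ h_2(\eta)-\hath=\bigl(\nu_2(\eta(1))-\hatnu\bigr)+\bigl(d-\hatd\bigr), \]
in which the first bracket is $0$ exactly when $\delta=0$ and $\la\ne\la'$ (and $-1$ otherwise) and the second is $0$ exactly when $\hatw_2=\bw_2(\la)$ or $\hatw_2=0$ (and $+1$ otherwise). The value $+1$ cannot occur, since $\delta=0$ forces $\hatw_2=\bw_2(\la)$ and hence the second bracket to vanish; thus $h_2(\eta)\in\{\hath-1,\hath\}$. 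The two brackets sum to $0$ either when both vanish---equivalent, using $\la\ne\la'\Rightarrow\bw_2(\la)\ge1$, to $\delta=0$, $\la\ne\la'$ and $\hatw_2>0$---or when they are $-1$ and $+1$, which forces $\hatw_2\ne\bw_2(\la)$ and $\hatw_2>0$, hence (by \Cref{thm:alt-determine-weights}\Cref{item:at-most-one-error}) that $i=1$ is the unique deviating index so that $\hatw_{2^i}=\bw_{2^i}(\la)$ for $i\ge2$, while $\bw_2(\la)$ odd gives $\la\ne\la'$. Both cases coincide with "$\hatw_2>0$, $\hatw_{2^i}=\bw_{2^i}(\la)$ for all $i\ge2$, and $\la$ not self-conjugate", giving the stated height formula. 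I expect this last reconciliation to be the main obstacle: the height condition uses only the weights with $i\ge2$, so one must carefully track how a deviation at $i=1$ simultaneously affects $\nu_2(\eta(1))$ and $d$ and how the two corrections cancel; the self-conjugacy parity fact and the defect-zero edge case are short but must not be omitted.
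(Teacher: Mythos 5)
Your proof is correct and follows essentially the same route as the paper: reduce everything to the weight estimates of \Cref{thm:alt-determine-weights} and \Cref{lemma:weight-estimates}, use the parity of the $2$-power weights of self-conjugate partitions, compare $\nu_2((2\bw_2(\la))!)$ with $\nu_2((2\hatw_2)!)$ for the defect, and combine the two corrections for the height. The only differences are organisational (your $\delta$ and two-bracket bookkeeping, and your explicit hook-involution proof of the parity fact, which the paper merely asserts).
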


\begin{remark}
	The conditions to distinguish the pairs of options in \Cref{cor:alt-determine-data} 
 (that is, the \(2\)-power weights and the self-conjugacy of the labelling partition) cannot in general be checked using vanishing on even permutations of \(2\)-power order.
	
However, there are many situations when the vanishing set can tell us which option holds.
For example, if there exists \(j \geq 1\) such that \(\hatw_{2^j}\) is odd, then $\lambda$ cannot be self-conjugate, and so (if also \(\hatw_2 > 0\)) the first option holds for each of the three statistics by \Cref{thm:alt-determine-weights}\Cref{item:odd-weight}.
As another example, we claim that if \(n-2\hatw_2 \neq 3\) or if \(n \not\equiv 3 \pmod{4}\), then \(\hatw_2 = \bw_2(\la)\) and hence \(d = \hatd\).
Indeed, \(\hatw_2 = \bw_2(\la) -1\) occurs only if \(\bw_2(\la)\) is odd, \(\auxw_2 = \hatw_2\) is even, and \(n-2\hatw_2\) is a triangular number.
But then \(n-2(\hatw_2+1)\) is also a triangular number (being the size of the \(2\)-core of \(\la\)), and \(1\) and \(3\) are the only pair of triangular numbers differing by \(2\), so \(n-2\hatw_2 = 3\) and hence \(n \equiv 3 \pmod{4}\).
\end{remark}

\begin{proof}[Proof of \Cref{cor:alt-determine-data}]
We have \(\nu_2(\chi^\la(1)) = n-m - \sum_{i \geq 1} \bw_{2^i}(\la)\) by \Cref{lemma:ppart_from_weights}.
Then by \Cref{thm:alt-determine-weights}, we have \(\nu_2(\chi^\la(1)) \in \{ \hatnu-1, \hatnu\}\) with \(\nu_2(\chi^\la(1)) = \hatnu\) if and only if \(\hatw_{2^i} = \bw_{2^i}(\la) \) for all \(i \geq 1\).
If \(\la\) is self-conjugate, then \(\eta(1) = \frac12 \chi^\la(1)\) and necessarily $\bw_{2^i}(\lambda)$ is even for all $i\ge 1$ (which implies \(\hatw_{2^i} = \bw_{2^i}(\la) \) for all \(i \geq 1\) by \Cref{lemma:weight-estimates}\Cref{item:even-weight-lemma}), so \(\nu_2(\eta(1)) = \nu_2(\chi^\la(1)) - 1 = \hatnu-1\).
If \(\la\) is not self-conjugate, then \(\eta(1) = \chi^\la(1)\) and the result for \(\eta(1)_2\) follows.
	
Next, recall the block of \(A_n\) containing \(\eta\) has defect \(d = \max\{\nu_2((2\bw_2(\la))!) - 1,0\}\) (see \Cref{subsec:blocks}).
Thus if \(\hatw_2 \in \{ \bw_2(\la), 0\}\) then $d=\hatd$ as claimed.
Otherwise, $\bw_2(\lambda)=\hatw_2+1$,
which can happen only if $\bw_2(\lambda)$ is odd by \Cref{lemma:weight-estimates}\Cref{item:even-weight-lemma}.
Hence
\begin{align*}
\nu_2( (2\bw_2(\la))! )
    &= \nu_2( (2\hatw_2)!) + \nu_2(2\bw_2(\la){-}1) + \nu_2( 2\bw_2(\la) ) \\
    &= \nu_2( (2\hatw_2)!) + 1
\end{align*}
and so  \(d = \max\{\, \nu_2( (2\bw_2(\la))! ) - 1,0 \,\} = \nu_2( (2\hatw_2)! )\).
On the other hand, \(\hatd = \nu_2( (2\hatw_2)! ) - 1\) for \(\hatw_2 > 0\).
Thus $d = \hatd+1$ is as claimed.

Finally, \(h_2(\eta) = \nu_2(\eta(1)) + d - (n-m-1)\) (see \Cref{subsec:blocks}, and use Legendre's formula to find the order of a Sylow \(2\)-subgroup of \(A_n\)),
so it suffices to compare the values of the previous two statistics under the claimed conditions. 
If \(\hatw_2 > 0\), \(\hatw_{2^i} = \bw_{2^i}(\la) \) for all \(i \geq 2\), and \(\la\) is not self-conjugate, then either: \(\hatw_2 = \bw_2(\la)\) in which case \(\nu_2(\eta(1)) = \hatnu\) and \(d = \hatd\) so \(h_2(\eta) = \hath\); or \(\hatw_2 \neq \bw_2(\la)\) in which case \(\nu_2(\eta(1)) = \hatnu-1\) and \(d = \hatd+1\) so again \(h_2(\eta) = \hath\).
Otherwise:

\begin{itemize}[leftmargin=*,topsep=0pt]
    \item
    If \(\hatw_2 = 0\): either \(\bw_2(\la) = 0\) and so \(\la\) is self-conjugate, or \(\bw_2(\la)=1\) and so \(\hatw_2 \neq \bw_2\); in either case, \(\nu_2(\eta(1)) = \hatnu-1\). Also, \(d = \hatd\), and so \(h_2(\eta) = \hath-1\).
\item
If there exists \(i \geq 2\) such that \(\hatw_{2^i} \neq \bw_{2^i}(\la)\): then \(\nu_2(\eta(1)) = \hatnu-1\).
Also, \(\hatw_2 = \bw_2(\la)\) by \Cref{thm:alt-determine-weights}\Cref{item:at-most-one-error}, and so \(d= \hatd\). Thus \(h_2(\eta) = \hath-1\).
\item
If \(\la\) is self-conjugate: then \(\nu_2(\eta(1)) = \hatnu-1\). Also, $\bw_{2}(\lambda)$ is even and thus $\hatw_{2}=\bw_{2}(\lambda)$ by \Cref{lemma:weight-estimates}\Cref{item:even-weight-lemma}, and so \(d= \hatd\). Thus \(h_2(\eta) = \hath-1\). \qedhere
\end{itemize}
\end{proof}

\Cref{cor:alt-determine-data} immediately yields the \(p=2\) case of \Cref{thm:main-alt-explicit}, and allows us to deduce \Cref{thm:main-alt-compare-pparts} as follows.
This completes the proofs of our main theorems.

\begin{proof}
[Proof of \(p=2\) case of \Cref{thm:main-alt-compare-pparts}]
Suppose \(\Van(\eta\down_Q) \subseteq \Van(\theta\down_Q)\).
Then \(\hatw_{2^i}(\eta) \geq \hatw_{2^i}(\theta)\) for all \(i \geq 1\) by definition, and hence \(\hatnu(\eta) \leq \hatnu(\theta)\).
Then \(\eta(1)_2 \leq 2^{\hatnu(\eta)} \leq 2^{\hatnu(\theta)} \leq 2 \cdot \theta(1)_2\) by \Cref{cor:alt-determine-data}.
\end{proof}

\bigskip

\section{Examples of vanishing behaviour}\label{sec:examples}

In this final section, we present a number of examples to show that our main theorems cannot be strengthened further in terms of characterising degrees using vanishing behaviour.

We will construct examples in terms of their \emph{\(e\)-core towers} (see \cite[Section 6]{OlssonBook}).
These towers are ways to express partitions by iteratively taking cores and quotients: given a partition \(\la\), its \(e\)-core tower is the sequence
\[ \big(T^C_e(\la)_k\big)_{k=0}^{\infty}, \]
where the \(k\)th term, referred to as the \emph{\(k\)th layer} (or \emph{\(k\)th row}), is the \(e^k\)-tuple of partitions
\[
    T^C_e(\la)_k = C_e\left(\underbrace{Q_e(Q_e(\cdots Q_e(}_{k}\la) \cdots )) \right).
\]

Equivalently, the tower can be constructed by iteratively replacing a partition with its \(e\)-core and placing its \(e\)-quotient in the next layer.
Following this construction, we draw a tower as rooted \(e\)-ary tree by placing an edge between each \(e\)-core and the components of the \(e\)-quotient at each step.
This process is illustrated below in the case \(e=2\) and \(\la = (5,3,3,3,1)\).

\vspace{-5pt}
{
\Yvcentermath0
\Yboxdim{9pt}
\[
\raisebox{-4pt}{\yng(5,3,3,3,1)}
\qquad\raisebox{-1cm}{\(\rightsquigarrow\)}\qquad
\begin{forest}
    tower
    [{\yng(1)}, baseline
        [{\yng(2,1)}]
        [{\yng(2,2)}]
    ]
\end{forest}
\qquad\raisebox{-1cm}{\(\rightsquigarrow\)}\qquad
\begin{forest}
    tower
    [{\yng(1)}, baseline, name=layerzero
        [{\yng(2,1)},
            [\text{\Large\(\emptyset\)}]
            [\text{\Large\(\emptyset\)}]
        ]
        [\text{\Large\(\emptyset\)}, name=layerone
            [{\yng(1)}]
            [{\yng(1)}, name=layertwo] 
        ] 
    ] 
\node at (3.5, |- layerzero) {\(= \ T^C_2(\la)_0\)};
\node at (3.5, |- layerone) {\( = \ T^C_2(\la)_1\)};
\node at (3.5, |- layertwo) {\( = \ T^C_2(\la)_2\)};
\end{forest}
\]
}
\vspace{0pt} 

Repeated applications of \cite[Proposition 3.7]{OlssonBook} show that every partition is uniquely determined by its $e$-core tower, and moreover, we have that $n=\sum_{k=0}^{\infty} |T_e^C(\lambda)_k|e^k$.

In addition, the $e$-core tower $T_e^C(\lambda')$ satisfies $T_e^C(\lambda')_0=(C_e(\lambda)')$, and for each $k\in\N$, the layer $T_e^C(\lambda')_k$ is obtained from $T_e^C(\lambda)_k$ by taking the conjugate of each partition and reversing the sequence of partitions. In particular, if $\lambda$ is self-conjugate (meaning $\lambda=\lambda'$) and $e$ is even, then $|T_e^C(\lambda)_k|$ must be even for all $k\in\N$.

Knowing the \(e\)-core tower layer sizes is equivalent to knowing the \(e\)-power weights. Given $\lambda$ in its \(p\)-core tower form, it is easy to calculate $\chi^\lambda(1)_p$, as shown below.

\begin{lemma}\label{prop:weights-to-tower-sizes}
	Let $n,k\in\N$ and $e\in\N_{\ge 2}$. Let $\lambda\in\cP(n)$. The size of the $k$th layer of the $e$-core tower of $\lambda$ is given in terms of the \(e\)-power weights of $\lambda$ by
	\[ \abs{T^C_e(\lambda)_k} = \bw_{e^k}(\lambda) - e\bw_{e^{k+1}}(\lambda). \]
    Let $p$ be a prime and suppose $n=\sum_{r\ge 0}b_rp^r$ is the $p$-adic expansion of $n$.
    Then
	\[ \nu_p(\chi^\lambda(1)) = \frac{\sum_{r\ge 0}|T^C_p(\lambda)_r| - \sum_{r\ge 0}b_r}{p-1}.\]
\end{lemma}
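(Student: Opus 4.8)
The plan is to prove the two displayed formulas in order, deriving the second from the first together with \Cref{lemma:ppart_from_weights}. For the layer-size formula, the key observation is that passing from a partition to its \(e\)-quotient multiplies \(e^k\)-weights appropriately: by \Cref{prop:quotient-hooks} and the fact that \(\bw_{e}(\la) = |Q_e(\la)|\), one has more generally \(\bw_{e^{k}}(\la) = |Q_{e^{k-1}}(Q_e(\cdots))|\) after peeling off one layer, and iterating (as in \Cref{prop:iterated_cores_and_quotients}) gives that the \(e^k\)-weight of \(\la\) equals the total number of boxes appearing in the \(e\)-core tower at layers \(k, k+1, k+2, \dots\), each layer-\(j\) box (with \(j \geq k\)) contributing a factor \(e^{j-k}\). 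Concretely, I would state and use the identity
\[ \bw_{e^k}(\la) = \sum_{j \geq k} e^{\,j-k}\,\abs{T^C_e(\la)_j}, \]
which follows by induction on \(k\): the base case \(k=0\) is just \(n = \sum_{j\geq 0} e^j\abs{T^C_e(\la)_j}\) combined with \(n = |C_{e^k}(\la)| + e^k \bw_{e^k}(\la)\) reorganised, while the inductive step replaces \(\la\) by \(Q_e(\la)\) and uses \(\bw_{e^k}(\la) = \abs{Q_{e^{k-1}}(Q_e(\la))}\) together with the fact that the \(e\)-core tower of \(Q_e(\la)\) is the tower of \(\la\) shifted up by one layer. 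Once this identity is in hand, the layer-size formula is immediate: \(\abs{T^C_e(\la)_k} = \bw_{e^k}(\la) - e\,\bw_{e^{k+1}}(\la)\) is exactly the \(j=k\) term isolated, since \(\bw_{e^k}(\la) - e\bw_{e^{k+1}}(\la) = \sum_{j\geq k}e^{j-k}\abs{T^C_e(\la)_j} - \sum_{j\geq k+1}e^{j-k}\abs{T^C_e(\la)_j}\).

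For the second formula, I would substitute into \Cref{lemma:ppart_from_weights} with \(p = e\): that lemma gives \(\nu_p(\chi^\la(1)) = \frac{n - \sum_r b_r}{p-1} - \sum_{i\geq 1}\bw_{p^i}(\la)\). Using \(\bw_{p^i}(\la) = \sum_{j\geq i} p^{\,j-i}\abs{T^C_p(\la)_j}\), swap the order of summation:
\[ \sum_{i\geq 1}\bw_{p^i}(\la) = \sum_{j\geq 1}\abs{T^C_p(\la)_j}\sum_{i=1}^{j} p^{\,j-i} = \sum_{j\geq 1}\abs{T^C_p(\la)_j}\,\frac{p^{j}-1}{p-1}. \]
Also \(n = \sum_{j\geq 0}p^j\abs{T^C_p(\la)_j}\), so \(n - \sum_{j\geq 0}\abs{T^C_p(\la)_j} = \sum_{j\geq 1}\abs{T^C_p(\la)_j}(p^j - 1)\). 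Dividing by \(p-1\) and subtracting the previous display leaves \(\nu_p(\chi^\la(1)) = \frac{\sum_{j\geq 0}\abs{T^C_p(\la)_j} - \sum_r b_r}{p-1}\) after the \(p^j-1\) terms cancel, which is the claimed formula.

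The only genuine obstacle is cleanly establishing the weight-versus-tower identity \(\bw_{e^k}(\la) = \sum_{j\geq k}e^{j-k}\abs{T^C_e(\la)_j}\); everything after that is bookkeeping with geometric sums. This identity can be obtained either by the inductive argument sketched above (invoking \Cref{prop:iterated_cores_and_quotients} and \Cref{prop:quotient-hooks} to justify that \(e^k\)-weight is additive across quotient layers with the stated powers of \(e\)), or directly from the abacus construction of the core tower, where a hook of \(\la\) of length divisible by \(e^k\) but landing in tower layer \(j\) corresponds to \(e^{j-k}\) hooks in the relevant component. I would present the inductive version since the ambient lemmas are already available in the excerpt, and I would take care to note that the sums are finite (only finitely many layers are nonempty).
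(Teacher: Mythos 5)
Your proposal is correct and rests on exactly the same ingredients as the paper's proof (\Cref{prop:iterated_cores_and_quotients}, the size relation \(\abs{\mu} = \abs{C_e(\mu)} + e\abs{Q_e(\mu)}\) applied componentwise, and \Cref{lemma:ppart_from_weights}); the only difference is organisational, since the paper obtains the layer formula directly from \(\abs{T^C_e(\la)_k} = \abs{C_e(Q_{e^k}(\la))} = \abs{Q_{e^k}(\la)} - e\abs{Q_{e^{k+1}}(\la)}\) and then sums over \(k\), whereas you first prove the cumulative identity \(\bw_{e^k}(\la) = \sum_{j\geq k} e^{j-k}\abs{T^C_e(\la)_j}\) by induction and recover both formulas by subtraction and geometric-sum bookkeeping. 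Your version is slightly longer but the steps (including the interchange of summation and the cancellation of the \(p^j-1\) terms) all check out, so this is essentially the paper's argument run in the reverse order.
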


\begin{proof}
Using the definition of the \(k\)th layer given above and \Cref{prop:iterated_cores_and_quotients}, we have
\begin{align*}
\abs{T^C_e(\la)_k}
    &= \abs{C_e(Q_{e^k}(\la))} \\
    &= \abs{Q_{e^k}(\la)} - e\abs{Q_e(Q_{e^k}(\la))} \\
    &= \abs{Q_{e^k}(\la)} - e\abs{Q_{e^{k+1}}(\la)} \\
    &= \bw_{e^k}(\la) - e\bw_{e^{k+1}}(\la)).
\end{align*}
Summing over all \(k\) we find \(\sum_{k \geq 0} |T^C_e(\la)_k| = n - (e-1)\sum_{i \geq 1} \bw_{e^i}(\la)\), and the expression for \(\nu_p(\chi^\lambda(1))\) follows from \Cref{lemma:ppart_from_weights}.
\end{proof}

\medskip

\subsection{Converses of main theorems fail}

We give two families of examples to show that the vanishing set $\Van(\chi^\lambda\down_P)$ of an irreducible character $\chi^\lambda\in\Irr(S_n)$ is \emph{not} determined by the $p$-part of its degree or its height, or by the \(p\)-power weights of the corresponding partition $\lambda$.

\begin{example}[Converse of \Cref{thm:main-sym-compare-pparts} fails for every odd prime]\label{eg:converse_fails}
    Let $p$ be any odd prime, let $n=p^2$, and let $\lambda = (p(p-1),p-1,1)$ and $\mu = (p(p-1),p)$.
    That is, $\lambda$ and $\mu$ are the partitions with $p$-core towers
	\[
	\begin{forest}
	for tree={l=1.4cm, s sep=0.5cm}
	[$\emptyset$,minimum width=1.3cm
	    [$\emptyset$,child anchor=north east]
	    [$\mathclap{\cdots}$,l=1.2cm, before computing xy={s=-1.6cm}, no edge]
	    [$\emptyset$]
	    [{\yng(1)}] 
	    [$\emptyset$] 
	    [$\underbrace{{\gyoung(;_2\hdts;)}}_{p-1}$,child anchor=north west]
	]
	\node at (current bounding box.west)[left=0.3ex,yshift=0.5cm]
	 {$\lambda = $};
	\end{forest}
	\quad
	\begin{forest}
	for tree={l=1.4cm,s sep=0.5cm}
	[$\emptyset$,minimum width=1.3cm
	    [$\emptyset$,child anchor=north east]
	    [$\mathclap{\cdots}$,l=1.2cm, before computing xy={s=-1.6cm}, no edge]
	    [$\emptyset$]
	    [$\emptyset$] 
	    [{\yng(1)}] 
	    [$\underbrace{{\gyoung(;_2\hdts;)}}_{p-1}$,child anchor=north west] 
	]
	\node at (current bounding box.west)[left=0.3ex,yshift=0.5cm]
	 {$\mu = $};
	\end{forest}
	\]
	These towers differ only by permuting two entries in the first layer $T_p^C(-)_1$, and so have the same layer sizes.
    Hence \(\la\) and \(\mu\) have the same \(p\)-power weights and $\chi^\lambda(1)_p = \chi^\mu(1)_p$
    by \cref{prop:weights-to-tower-sizes}, and hence also $\chi^\lambda$ and $\chi^\mu$ have equal heights since they lie in the same $p$-blocks (because $C_p(\lambda)=C_p(\mu)$). 
	However, $\chi^\mu$ vanishes on products of $(p-1)$ many disjoint $p$-cycles whereas $\chi^\lambda$ does not: writing $g$ for such an element, the \MN rule and the hook length formula give
	\begin{align*}
    	    \chi^\lambda(g) &= -(p-1) \chi^{(p)}(1) - \chi^{(p-2,1,1)}(1) = - \binom{p}{2} \\
		\intertext{but}
	    \chi^\mu(g) &= \phantom{-}(p-1)\chi^{(p)}(1) - \chi^{(p-1,1)}(1) = 0. 
	\end{align*}
Thus \(\Van(\chi^\la\down_P) \neq \Van(\chi^\mu\down_P)\).
\end{example}

\begin{example}[Converse of \Cref{thm:main-sym-compare-pparts} fails infinitely often for \(p=2\)]\label{eg:converse_fails_p=2}
Let \(p=2\), let \(r \geq 2\), let \(T_r = \binom{r+1}{2}\) be the \(r\)th triangular number, and let \(\kappa_r\) denote the unique \(2\)-core partition of \(T_r\), that is, $\kappa_r=(r,r-1,\dotsc,2,1)$.
Let \(n = T_r + 8\) and let \(\la\) and \(\mu\) be the partitions with \(2\)-core towers
	\[
	\begin{forest}
		tower
		[\kappa_r,
		    [\emptyset,
		        [{\yng(1)}]
		        [{\yng(1)}]
		    ]
		    [\emptyset,
		        [\emptyset]
		        [\emptyset]
		    ] 
		]
		\node at (current bounding box.west)[left=1ex]
		 {$\lambda = $};
		\end{forest}
		\qquad\qquad
		\begin{forest}
		tower
		[\kappa_r,
		    [\emptyset,
		        [{\yng(1)}]
		        [\emptyset]
		    ]
		    [\emptyset,
		        [\emptyset]
		        [{\yng(1)}]
		    ] 
		]
		\node at (current bounding box.west)[left=1ex]
		 {$\mu = $};
	\end{forest}
	\]
    	These towers differ only by permuting two entries in the second layer $T_2^C(-)_2$, and so have the same layer sizes. Hence \(\la\) and \(\mu\) have the same \(2\)-power weights and $\chi^\lambda(1)_2 = \chi^\mu(1)_2$ 
        by \cref{prop:weights-to-tower-sizes}, and hence also $\chi^\lambda$ and $\chi^\mu$ have equal heights since they lie in the same $2$-blocks (because $C_2(\lambda)=C_2(\mu)$). 
    However, \(\chi^\la\) is non-zero on transpositions since \(\la\) has a unique \(2\)-hook (and using \Cref{lemma:unique_partition_implies_non-zero}), while \(\chi^\mu\) vanishes on transpositions since \(\mu\) is self-conjugate.
    Thus \(\Van(\chi^\la\down_P) \neq \Van(\chi^\mu\down_P)\).
\end{example}

\medskip

\subsection{Cannot fully determine statistics of characters of \texorpdfstring{\(A_n\) when \(p=2\)}{An when p=2}}

We give several examples to demonstrate that we cannot uniquely determine the \(2\)-part of the degree, the defect of the \(2\)-block, or the \(2\)-height of an irreducible character of \(A_n\) from the vanishing set on a Sylow \(2\)-subgroup \(Q\) of $A_n$.
We present our examples in \cref{tab:Q-vanishing_pairs}; in each case, an example is given as a pair of distinct, non-conjugate partitions \(\la\) and \(\mu\) which label irreducible characters of \(A_n\) with equal vanishing sets on \(Q\), and which have the claimed \(2\)-parts of degrees, defects of \(2\)-blocks, \(2\)-heights, and self-conjugacy.
That they have the claimed data is clear from the \(2\)-core towers of the partitions and \Cref{prop:weights-to-tower-sizes}.
That they have the same vanishing sets is verified by direct computation.
In the cases of \Cref{egtab:equal_data_neither_sc,,egtab:equal_data_both_sc,egtab:equal_data_onesc,egtab:diff_2part_diff_height_onesc}, 
in fact the \(2\)-core can be replaced by any larger \(2\)-core to obtain infinitely many examples with the same properties; the equal vanishing sets for these families can be shown using (sometimes challenging) applications of the \MN rule and the hook length formula.

The significance of these examples is as follows.
Recall that \Cref{cor:alt-determine-data} tells us that, amongst irreducible characters with equal vanishing sets in $Q$, the three pieces of data in consideration differ by at most $1$.
Our examples show that such a pair of characters may agree or disagree on any combination of these three numbers, subject to the relationship between \(2\)-part of degree, defect and height requiring that if any two agree or disagree then the third must agree.
In particular, given two such characters with equal vanishing on \(Q\), additionally specifying one of the three statistics is insufficient to determine either of the other two (so, for example, two irreducible characters lying in the same block with the same vanishing set on \(Q\) may still have different \(2\)-part of degree and \(2\)-height, as in \Cref{egtab:diff_2part_diff_height_neither_sc,egtab:diff_2part_diff_height_onesc}).

Our examples furthermore show that, with one exception, each of the combinations of disagreement can occur whether or not one of the labelling partitions is self-conjugate.
Self-conjugacy of the labelling partition is equivalent to the irreducible character of $S_n$ splitting into a sum of two irreducible characters upon restriction to $A_n$, and hence the \(2\)-part of the degree halving 
-- and so is a potential cause for disagreements in our statistics, in addition to the uncertainty in the \(2\)-power weights described in \Cref{thm:alt-determine-weights}.
Our examples demonstrate that both causes indeed manifest, and that, in fact, when they occur together they can cancel out (as in \Cref{egtab:equal_data_onesc}).
This also highlights that the vanishing set on \(Q\) is insufficient to detect self-conjugacy of the labelling partition (even if given all three of our statistics, as in \Cref{egtab:equal_data_onesc}).

Note that if we did know that $\eta\in\Irr(A_n)$ was labelled by a self-conjugate partition, then by \Cref{cor:alt-determine-data} the three statistics are uniquely determined from the vanishing set.
Thus, in particular, if two irreducible characters of $A_n$ with equal vanishing on \(Q\) are both labelled by self-conjugate partitions, necessarily they agree on all three statistics.

The one exception to self-conjugacy being possible is the following.

\begin{proposition}
	Let \(n \in \N\). Let $Q\in\Syl_2(A_n)$. Let \(\eta,\theta \in \Irr(A_n)\) with \(\Van(\eta\down_Q) = \Van(\theta\down_Q)\).
	Suppose \(\eta(1)_2 \neq \theta(1)_2\) and that \(\eta\), \(\theta\) lie in distinct blocks.
	Then \(\eta\) and \(\theta\) are labelled by partitions which are not self-conjugate.
\end{proposition}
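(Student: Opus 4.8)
The plan is to argue by contradiction: suppose that at least one of $\eta,\theta$ is labelled by a self-conjugate partition, i.e.\ is of the form $\phi^{\la\pm}$ for some $\la=\la'\in\cP(n)$ (we may assume $n\geq 2$, since for $n=1$ the character $\eta$ is forced to equal $\theta$ and the hypothesis $\eta(1)_2\neq\theta(1)_2$ is vacuous). The first observation is that the estimates $\hatw_{2^i}$ of \Cref{def:weight-estimates}, and hence the number $\hatnu$ of \Cref{def:alt-data-estimates}, depend on a character $\xi\in\Irr(A_n)$ only through the set $\{g\in Q:\xi(g)\ne 0\}=Q\setminus\Van(\xi\down_Q)$ and the fixed integer $n$; thus the hypothesis $\Van(\eta\down_Q)=\Van(\theta\down_Q)$ makes $\hatw_{2^i}$ and $\hatnu$ take the same values whether computed from $\eta$ or from $\theta$. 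By \Cref{cor:alt-determine-data}, any irreducible character of $A_n$ labelled by a self-conjugate partition has $2$-part of degree $2^{\hatnu-1}$; in particular $\eta$ and $\theta$ cannot both be so labelled (else $\eta(1)_2=\theta(1)_2$), so without loss of generality $\eta\in\{\phi^{\la+},\phi^{\la-}\}$ with $\la=\la'$ and $\eta(1)_2=2^{\hatnu-1}$, while $\theta$ is covered by some $\chi^\mu$ with $\mu\in\cP(n)$.

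The next step is to pin down $\bw_2(\mu)$. By \Cref{cor:alt-determine-data} we have $\theta(1)_2\in\{2^{\hatnu-1},2^{\hatnu}\}$, and since $\theta(1)_2\neq\eta(1)_2$ we must have $\theta(1)_2=2^{\hatnu}$; the same corollary then forces $\mu$ to be not self-conjugate and $\bw_{2^i}(\mu)=\hatw_{2^i}$ for all $i\geq 1$. (This can also be seen directly: by \Cref{lemma:ppart_from_weights} one computes $\nu_2(\chi^\mu(1))$ from the $\bw_{2^i}(\mu)$, which by \Cref{lemma:weight-estimates}\Cref{item:upper-bound} and \Cref{thm:alt-determine-weights} lie between the $\hatw_{2^i}$ and a quantity with $\sum_i\bw_{2^i}(\mu)\leq\sum_i\hatw_{2^i}+1$; excluding the possibility $\mu=\mu'$ and comparing with $\nu_2(\eta(1))=\hatnu-1$ forces $\nu_2(\theta(1))=\hatnu$ and hence each $\bw_{2^i}(\mu)=\hatw_{2^i}$.) On the other hand, since $\la=\la'$, all of its $2$-power weights are even (this follows from \Cref{prop:weights-to-tower-sizes} together with the fact that $\abs{T_2^C(\la)_k}$ is even for all $k$ when $\la=\la'$), so $\hatw_{2^i}=\bw_{2^i}(\la)$ for all $i\geq 1$ by \Cref{lemma:weight-estimates}\Cref{item:even-weight-lemma}. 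In particular $\bw_2(\mu)=\hatw_2=\bw_2(\la)$.

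Finally, I would convert $\bw_2(\mu)=\bw_2(\la)$ into a contradiction with the block hypothesis. Since $\abs{C_2(\nu)}=n-2\bw_2(\nu)$ for any $\nu\in\cP(n)$, we get $\abs{C_2(\mu)}=\abs{C_2(\la)}$; but a $2$-core is the staircase partition $(k,k-1,\dotsc,1)$ determined by its size, so $C_2(\mu)=C_2(\la)$, whence $\chi^\mu$ and $\chi^\la$ lie in the same $2$-block $B$ of $S_n$ (see \Cref{subsec:blocks}). If $\bw_2(\la)=0$ then $B$ has defect $0$ and $\mu=C_2(\mu)=C_2(\la)=\la=\la'$, contradicting that $\mu$ is not self-conjugate; hence $\bw_2(\la)>0$, so $B$ has positive defect and therefore covers a unique $2$-block $b$ of $A_n$. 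As $\eta$ is a constituent of $\chi^\la\down_{A_n}$ and $\theta$ a constituent of $\chi^\mu\down_{A_n}$, both $\eta$ and $\theta$ lie in $b$, contradicting the assumption that they lie in distinct blocks. This contradiction completes the proof.

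I expect the middle step to be the main obstacle: one must correctly combine the one-sided slack in \Cref{thm:alt-determine-weights} (at most one $2$-power weight of $\mu$ may exceed its estimate, and only by $1$) with the parity forced by self-conjugacy, while simultaneously accounting for the halving of the degree caused by $\la$ being self-conjugate — it is exactly the interplay of these two sources of uncertainty in \Cref{cor:alt-determine-data} that must be untangled. By contrast, the first step is a bookkeeping observation that $\hatw_{2^i}$ and $\hatnu$ are invariants of the vanishing set on $Q$, and the last step is a short argument using only that $2$-cores are determined by their size together with the descriptions of $2$-blocks of $S_n$ and $A_n$ recalled in \Cref{subsec:blocks}.
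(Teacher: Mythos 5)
Your proof is correct and follows essentially the same route as the paper's: assume one labelling partition is self-conjugate, use the evenness of its $2$-power weights together with \Cref{cor:alt-determine-data} to force the other partition to be non-self-conjugate with all weights equal to the estimates $\hatw_{2^i}$, and conclude that both characters lie in the same block. The only (cosmetic) difference is that you finish via $\bw_2(\la)=\bw_2(\mu)$ and equality of $2$-cores rather than via equality of the defects $\hatd$, and you treat the weight-zero edge case explicitly, which is if anything slightly more careful than the paper.
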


\begin{proof}
	Let \(\la, \mu \in \cP(n)\) be such that \(\chi^\la\) covers \(\eta\) and \(\chi^\mu\) covers \(\theta\).
	We use the notation of \Cref{sec:An}, with \(\hatw_{2^i}\), \(\hatnu\) and \(\hatd\) constructed from the common vanishing set of \(\eta\) and \(\theta\).
	
	Suppose towards a contradiction that \(\mu\) is self-conjugate.
	Then \(\bw_{2}(\mu)\) is even so \(\hatw_{2} = \bw_{2}(\mu)\) by \cref{lemma:weight-estimates}\Cref{item:even-weight-lemma}.
	Also, by \Cref{cor:alt-determine-data}, we have \(\theta(1)_2 = 2^{\hatnu-1}\), and in order that \(\eta(1)_2 \neq \theta(1)_2\), we must have \(\la\) not self-conjugate and \(\bw_{2^i}(\la) = \hatw_{2^i}\) for all $i\ge 1$.
	But then, by \Cref{cor:alt-determine-data} again, the defects of the blocks containing \(\eta\) and \(\theta\) are both of defect \(\hatd\), and so \(\eta\) and \(\theta\) lie in the same block (since there is a unique \(2\)-core of any given size), a contradiction.
\end{proof}

\newcommand{\exlabelnudge}{21pt}

\newcommand{\partbypartssize}{\tiny}

\newcommand{\OOOnsc}{
	\begin{tabexample}\label{egtab:equal_data_neither_sc}\end{tabexample}
	&
	\begin{forest}
		compact tower
		[\emptyset, 
		[\emptyset, baseline
		[\emptyset]
		[\emptyset]
		]
		[\emptyset,
		[\emptyset]
		[{\yng(1)}]
		] 
		]
		\node at (current bounding box.south)[below=0ex]{\partbypartssize $= (4)$};
	\end{forest}
	&
	\begin{forest}
		compact tower
		[\emptyset, 
		[\emptyset, baseline
		[\emptyset]
		[\emptyset]
		]
		[\emptyset,
		[{\yng(1)}]
		[\emptyset]
		] 
		]
		\node at (current bounding box.south)[below=0ex]{\partbypartssize $= (3,1)$};
	\end{forest} 
}

\newcommand{\OOObsc}{
	\begin{tabexample}\label{egtab:equal_data_both_sc}\end{tabexample}
	&
	\begin{forest}
		compact tower
		[\emptyset, 
		[\emptyset, baseline
		[{\yng(1)}]
		[\emptyset]
		]
		[\emptyset,
		[\emptyset]
		[{\yng(1)}]
		] 
		]
		\node at (current bounding box.south)[below=0ex]{\partbypartssize $= (4,2,1,1)$};
	\end{forest}
	&
	\begin{forest}
		compact tower
		[\emptyset, 
		[\emptyset, baseline
		[\emptyset]
		[{\yng(1)}]
		]
		[\emptyset,
		[{\yng(1)}]
		[\emptyset]
		] 
		]
		\node at (current bounding box.south)[below=0ex]{\partbypartssize $= (3,3,2)$};
	\end{forest}
}

\newcommand{\OOOsc}{
	\begin{tabexample}\label{egtab:equal_data_onesc}\end{tabexample}
	&
	\begin{forest}
		compact tower
		[\emptyset, 
		[\emptyset, baseline
		[\emptyset]
		[\emptyset]
		]
		[\emptyset,
		[\emptyset]
		[{\yng(1)}]
		] 
		]
		\node at (current bounding box.south)[below=0ex]{\partbypartssize $= (4)$};
	\end{forest}
	&
	\begin{forest}
		compact tower
		[\emptyset, 
		[{\yng(1)}, baseline
		]
		[{\yng(1)} 
		] 
		]
		\node at (current bounding box.south)[below=4ex]{\partbypartssize $= (2,2)$};
	\end{forest}
}

\newcommand{\IOIsc}{
	\begin{tabexample}\label{egtab:diff_2part_diff_height_onesc}\end{tabexample}
	&
	\begin{forest}
		compact tower
		[{\yng(3,2,1)} 
		[{\yng(3,2,1)}, baseline]
		[\emptyset, minimum size={19pt}]
		]
		\node at (current bounding box.south)[below=0ex]{\partbypartssize $= (9,6,3)$};
	\end{forest}
	&
	\begin{forest}
		compact tower
		[{\yng(3,2,1)} 
		[{\yng(2,1)}, baseline]
		[{\yng(2,1)}] 
		]
		\node at (current bounding box.south)[below=0ex]{\partbypartssize $= (7,4,2,2,1,1,1)$};
	\end{forest}
}

\newcommand{\fourlayertower}{%
	\!\begin{forest}
		really compact tower
		[{\yng(1)},
		[\emptyset, baseline
		[\emptyset,
		[\emptyset]
		[\emptyset]
		]
		[{\yng(1)},
		[{\yng(1)}]
		[\emptyset]
		]
		]
		[\emptyset
		[{\yng(1)},
		[\emptyset]
		[\emptyset]
		]
		[\emptyset,
		[\emptyset]
		[\emptyset]
		]
		]
		]
		\node at (current bounding box.south)[below=0ex]{\partbypartssize $= (13,1,1,1,1)$};
	\end{forest}\!}

\newcommand{\IOInsc}{
	\begin{tabexample}\label{egtab:diff_2part_diff_height_neither_sc}\end{tabexample}
	&
	\fourlayertower
	&
	\flexbox[c]{\begin{forest} 
			compact tower
			[{\yng(1)},
			[\emptyset, baseline
			[\emptyset]
			[{\yng(2,1)}]
			]
			[\emptyset,
			[\emptyset]
			[{\yng(1)}]
			] 
			]
			\node at (current bounding box.south)[below=2ex]{\partbypartssize $= (9,3,3,2)$};
	\end{forest}}
	{\fourlayertower}
}

\newcommand{\OIIsc}{
	\begin{tabexample}\label{egtab:diff_block_diff_height_onesec}\end{tabexample}
	&
	\begin{forest}
		compact tower, for tree={minimum size={15pt}}
		[{\yng(1)},
		[{\yng(1)}, baseline]
		[\emptyset]
		]
		\node at (current bounding box.south)[below=0ex]{\partbypartssize $= (3)$};
	\end{forest}
	&
	\begin{forest}
		compact tower
		[{\yng(2,1)},
		[\phantom{\emptyset}, baseline, no edge]
		]
		\node at (current bounding box.south)[below=0ex]{\partbypartssize $= (2,1)$};
	\end{forest}
}

\newcommand{\OIInsc}{
	\begin{tabexample}\label{egtab:diff_block_diff_height_neithersc}\end{tabexample}
	&
	\begin{forest}
		compact tower, for tree={minimum size={15pt}}
		[{\yng(1)},
		[{\yng(2,1)}, baseline]
		[\emptyset, minimum size={14pt}]
		]
		\node at (current bounding box.south)[below=4ex]{\partbypartssize $= (5,2)$};
	\end{forest}
	&
	\begin{forest}
		compact tower
		[{\yng(2,1)},
		[\emptyset, baseline
		[\emptyset]
		[\emptyset]
		]
		[\emptyset,
		[\emptyset]
		[{\yng(1)}]
		] 
		]
		\node at (current bounding box.south)[below=0ex]{\partbypartssize $= (6,1)$};
	\end{forest}
}

\newcommand{\IIOnsc}{
	\begin{tabexample}\label{egtab:diff_2part_diff_block_neither_sc}\end{tabexample}
	&
	\begin{forest}
		compact tower, for tree={minimum size={15pt}}
		[{\yng(1)},
		[{\yng(3,2,1)}, baseline]
		[{\yng(1)}, minimum size={19pt}]
		]
		\node at (current bounding box.south)[below=0ex]{\partbypartssize $= (7,4,2,2)$};
	\end{forest}
	&
	\begin{forest}
		compact tower
		[{\yng(2,1)},
		[{\yng(3,2,1)}, baseline]
		[\emptyset, minimum size={19pt}] 
		]
		\node at (current bounding box.south)[below=0ex]{\partbypartssize $= (3,3,2,2,2,1,1,1)$};
	\end{forest}
}

\newcommand{\tabexbox}[1]{\parbox{45pt}{\small #1}}
\newcolumntype{E}{>{\collectcell\tabexbox}r<{\endcollectcell}}
\newcolumntype{t}{>{\tiny}c}


\begin{table}[H]
\captionsetup{width=15cm}
	\caption{
		Differences in pairs of irreducible characters of \(A_n\) with equal vanishing on \(Q\).
	}
	\centering
    \vspace{-5pt}
	\begin{tabular}[t]{CCCt@{\hskip 10pt}E@{\hskip -5pt}C@{\hskip -5pt}C} \toprule
		\multicolumn{3}{c}{\small Difference in...} &
		\multirow{3}{*}{\footnotesize \makecell{Self-conjugacy \\ of labelling \\ partitions}} &
		&\multicolumn{2}{c}{\footnotesize Example}
		\\[1pt]
		\multicolumn{1}{c}{\tiny \makecell{\(2\)-adic \\ valuation \\ of degree}}\!\! &
		\multicolumn{1}{c}{\tiny \makecell{defect \\ of \(2\)-block}}\!\! &
		\multicolumn{1}{c}{\tiny \makecell{\(2\)-height}}\! &&& \lambda & \mu
		\\
		\midrule 
		\multirow{3}{*}[-65pt]{0} & \multirow{3}{*}[-65pt]{0} & \multirow{3}{*}[-65pt]{0}
		& \makecell{neither \\ self-conjugate} & \OOOnsc \\
		\nudge
		&&& \makecell{both\\ self-conjugate} & \OOObsc \\
		\nudge
		&&& \makecell{exactly one \\ self-conjugate} & \OOOsc \\
		\halfnudge
		\midrule 
		\halfnudge
		\multirow{2}{*}[-39pt]{1} & \multirow{2}{*}[-39pt]{0} & \multirow{2}{*}[-39pt]{1}
		& \makecell{neither \\ self-conjugate} & \IOInsc \\
		\nudge
		&& 
		& \makecell{exactly one \\ self-conjugate} & \IOIsc \\
		\halfnudge
		\midrule 
		\halfnudge
		\multirow{2}{*}[-29pt]{0} & \multirow{2}{*}[-29pt]{1} & \multirow{2}{*}[-29pt]{1}
		& \makecell{neither \\ self-conjugate} & \OIInsc \\
		\nudge
		&&
		& \makecell{exactly one \\ self-conjugate} & \OIIsc \\
		\halfnudge
		\midrule 
		\halfnudge
		1 & 1 & 0 & \makecell{neither \\ self-conjugate} & \IIOnsc \\ \halfnudge
		\bottomrule
	\end{tabular}
	\label{tab:Q-vanishing_pairs}
\end{table}

\bigskip
\section*{Acknowledgements}
We thank Gabriel Navarro for stimulating conversations on the subject of this article.
We are grateful to an anonymous referee for their comments on an earlier version of this paper, and in particular for pointing out the containment-inequality interpretation of our results now included in the statements of \Cref{thm:main-sym-compare-pparts} and \Cref{thm:main-sym-compare-allp}.

The first author acknowledges support from the European Union -- Next Generation EU, M4C1, CUP B53D23009410006, PRIN 2022; and from the INDAM-GNSAGA Project CUP E53C24001950001.
The third author was affiliated with Okinawa Institute of Science and Technology (OIST) Graduate University while much of this work was carried out.
The first and second authors thank the Heilbronn Institute for Mathematical Research for a Small Grant which supported the first author's visit to the University of Birmingham during which part of this work was done.
The first and third authors also thank the Algebra research group at the School of Mathematics, University of Birmingham for their hospitality during their visits.

\bigskip




\end{document}